\pgfplotsset{compat=1.9}
\newcommand{\nats}{\mathbb{N}}
\newcommand{\reals}{\mathbb{R}}
\newcommand{\realsnonneg}{\reals_{\geq 0}}
\newcommand{\states}{\mathcal{X}}
\newcommand{\gambles}{{\mathbb{R}^{\states}}}
\newcommand{\abs}[1]{{\lvert #1\rvert}}
\newcommand{\norm}[1]{{\left\lVert #1\right\rVert}}
\newcommand{\headercite}[2]{{\hspace{1sp}{\cite[#2]{#1}}}}
\begin{document}
	
\xdefinecolor{rood}{RGB}{241, 90, 34}
\xdefinecolor{geel}{RGB}{255, 197, 11}
\xdefinecolor{roze}{RGB}{236, 0, 140}
\xdefinecolor{groen}{RGB}{179, 211, 53}
\xdefinecolor{grijs}{RGB}{72, 119, 116}

\title{Computing Expected Hitting Times for Imprecise Markov Chains\thanks{Supported by H2020-MSCA-ITN-2016 UTOPIAE, grant agreement 722734.}}
%
%
\author{Thomas Krak\orcidID{0000-0002-6182-7285}}
\authorrunning{Krak}
%
\institute{ELIS -- FLip, Ghent University, Belgium\\
\email{Thomas.Krak@UGent.be}}
\maketitle              
\begin{abstract}
We present a novel algorithm to solve a non-linear system of equations, whose solution can be interpreted as a tight lower bound on the vector of expected hitting times of a Markov chain whose transition probabilities are only partially specified. We also briefly sketch how this method can be modified to solve a conjugate system of equations that gives rise to the corresponding upper bound.
We prove the correctness of our method, and show that it converges to the correct solution in a finite number of steps under mild conditions on the system. We compare the runtime complexity of our method to a previously published method from the literature, and identify conditions under which our novel method is more efficient.

\keywords{Imprecise Markov Chains  \and Expected Hitting Times \and Imprecise Probabilities \and Non-Linear Systems \and Computational Methods}
\end{abstract}
\section{Introduction}\label{sec:introduction}

We study the non-negative solution of, and in particular a numerical method to solve non-negatively, the (possibly) non-linear system
\begin{equation}\label{eq:fundamental_system}
\underline{h} = \mathbb{I}_{A^c} + \mathbb{I}_{A^c}\cdot\underline{T}\,\underline{h}\,,
\end{equation}
where $A$ is a non-empty strict subset of a non-empty finite set $\mathcal{X}$, $A^c\coloneqq \mathcal{X}\setminus A$, $\reals^\states$ is the set of all maps from $\states$ to $\reals$, $\mathbb{I}_{A^c}\in\reals^\states$ is the indicator of $A^c$, defined as $\mathbb{I}_{A^c}(x)\coloneqq 1$ if $x\in A^c$ and $\mathbb{I}_{A^c}(x)\coloneqq 0$ otherwise, $\underline{h}\in \smash{\reals^{\mathcal{X}}}$ is the vector that solves the system, and $\underline{T}:\mathbb{R}^{\mathcal{X}}\to\mathbb{R}^{\mathcal{X}}$ is a (possibly) non-linear map that satisfies the \emph{coherence} conditions~\cite{decooman:2010:markovepistemic}
\begin{enumerate}[ref={C\arabic*},label={C\arabic*}.,leftmargin=*]
\item $\underline{T}\,(\alpha f)=\alpha\underline{T}\,f$ for all $f\in\mathbb{R}^{\mathcal{X}}$ and $\alpha\in\realsnonneg$;\hfill (\emph{non-negative homogeneity})\label{coh:nonneghomogen}
\item $\underline{T}\,f + \underline{T}\,g\leq \underline{T}\,(f+g)$ for all $f,g\in\mathbb{R}^{ \mathcal{X}}$;\hfill (\emph{super-additivity})\label{coh:superadd}
\item $\min_{x\in\states}f(x)\leq\underline{T}\,f$ for all $f\in\mathbb{R}^{\mathcal{X}}$.\hfill (\emph{lower bounds})\label{coh:bounds}
\end{enumerate}
Note that in Equation~\eqref{eq:fundamental_system}, the operation $\cdot$ denotes element-wise multiplication, here applied to the functions $\mathbb{I}_{A^c}$ and $\underline{T}\,\underline{h}$.

The motivation for this problem comes from the study of Markov chains using the theory of imprecise probabilities~\cite{walley1991statistical,augustin:2014}. In this setting, $\mathcal{X}$ is interpreted as the set of possible states that some dynamical system of interest can be in, and $\underline{T}$ is the \emph{lower transition operator}, which represents the (imprecise) probability model for switching from any state $x$ to any state $y$ in a single time step, e.g.
\begin{equation}\label{eq:interpretation_lower_trans}
\bigl[\underline{T}\,\mathbb{I}_{\{y\}}\bigr](x) = \underline{P}(X_{n+1}=y\,\vert\,X_n=x)\,,
\end{equation}
where $\underline{P}(X_{n+1}=y\,\vert\,X_n=x)$ denotes the \emph{lower probability}, i.e. a tight lower bound on the probability, that the system will be in state $y$ at time $n+1$, if it is in state $x$ at time $n$. This lower bound is understood to be taken with respect to the set of stochastic processes induced by $\underline{T}$, and it is this set that is called the corresponding \emph{imprecise Markov chain}. We refer to~\cite{kozine2002interval,campos2003computing,hartfiel2006markov,skulj2006finite,decooman:2008:trees,decooman:2009:markovlimit,decooman:2010:markovepistemic,itip:stochasticprocesses,lopatatzidis2016robust} for further general information on Markov chains with imprecise probabilities. 
In this setting, the minimal non-negative solution $\underline{h}$ to Equation~\eqref{eq:fundamental_system} is then such that $\underline{h}(x)$ can be interpreted as a tight lower bound on the expected number of steps it will take the system to move from the state $x$, to any state contained in $A$. This vector $\underline{h}$ is called the (lower) expected hitting time of $A$, for the imprecise Markov chain characterised by $\underline{T}$. A version of this characterisation of the expected hitting times for imprecise Markov chains was first published by De Cooman \emph{et al.}~\cite{decooman2016impreciseprocesses}, and was later generalised to Markov chains under various imprecise probabilistic interpretations by Krak \emph{et al.}~\cite{krak2019hitting}. 

In the special case that $\underline{T}$ is a linear map, i.e. when~\ref{coh:superadd} is satisfied with equality for all $f,g\in\gambles$, then $\underline{T}$ can be interpreted as a $\lvert\states\rvert\times\lvert\states\rvert$ matrix $T$, which is called the \emph{transition matrix} in the context of (classical) Markov chains. This $T$ is row-stochastic, meaning that $\sum_{y\in\states}T(x,y)=1$ for all $x\in\states$, and $T(x,y)\geq 0$ for all $x,y\in\states$, and encodes the probability to move between states in one step, i.e. $T(x,y) = P(X_{n+1}=y\,\vert\,X_n=x)$, 
in analogy to Equation~\eqref{eq:interpretation_lower_trans}. In this case, the minimal non-negative solution $\underline{h}$ to Equation~\eqref{eq:fundamental_system} is well-known to represent the expected hitting time of $A$ for the homogenous Markov chain identified by $T$; see e.g.~\cite{norris:markovchains} for details.

Before moving on, let us first consider why the problem is somewhat ill-posed.
\begin{proposition}[\cite{krak2019hitting}]\label{prop:basic_existence}
Let $\underline{T}:\gambles\to\gambles$ be a map that satisfies the coherence conditions~\ref{coh:nonneghomogen}-\ref{coh:bounds}. Then Equation~\eqref{eq:fundamental_system} has a (unique) minimal non-negative solution $\underline{h}$ in $(\reals\cup\{+\infty\})^{\states}$, where minimality means that $\underline{h}(x)\leq h(x)$ for all $x\in\states$ and all non-negative $h\in (\reals\cup\{+\infty\})^{\states}$ that solve Equation~\eqref{eq:fundamental_system}.
\end{proposition}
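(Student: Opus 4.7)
The plan is to realise $\underline{h}$ as the least fixed point of the monotone operator
$\Phi\colon h \mapsto \mathbb{I}_{A^c} + \mathbb{I}_{A^c}\cdot\underline{T}\,h$
via Picard iteration from the zero vector, à la Knaster--Tarski. First I would extract two consequences of~\ref{coh:nonneghomogen}--\ref{coh:bounds} that do most of the work. \emph{Monotonicity of $\underline{T}$}: given $f\leq g$ in $\gambles$, the function $g-f$ is non-negative, so $\underline{T}(g-f)\geq \min(g-f)\geq 0$ by~\ref{coh:bounds}, and super-additivity gives $\underline{T}g \geq \underline{T}f+\underline{T}(g-f)\geq \underline{T}f$. \emph{Constant additivity} $\underline{T}(f+c\mathbf{1})=\underline{T}f+c$ for all $c\in\reals$ follows from applying super-additivity in both directions together with~\ref{coh:bounds} to the constant function $c\mathbf{1}$, which in turn yields the $\sup$-norm $1$-Lipschitz bound $\norm{\underline{T}f-\underline{T}g}_\infty\leq\norm{f-g}_\infty$. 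Since $\mathbb{I}_{A^c}\geq 0$, $\Phi$ inherits monotonicity. Finally, extend $\underline{T}$ to non-negative $h\in(\reals\cup\{+\infty\})^\states$ by
\[
[\underline{T}h](x) \;\coloneqq\; \sup\bigl\{[\underline{T}f](x) : f\in\gambles,\ 0\leq f\leq h\bigr\},
\]
which remains monotone and agrees with the original $\underline{T}$ on $\gambles$.

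Next I would set $h_0\equiv 0$ and iterate $h_{n+1}\coloneqq \Phi(h_n)$. Because~\ref{coh:nonneghomogen} with $\alpha=0$ gives $\underline{T}\,0=0$, we get $h_1=\mathbb{I}_{A^c}\geq h_0$, whence by monotonicity of $\Phi$ the sequence $(h_n)_n$ is non-decreasing in $\gambles$. Define $\underline{h}(x)\coloneqq \sup_n h_n(x)\in\reals\cup\{+\infty\}$; this is the candidate solution.

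The main obstacle is showing $\Phi(\underline{h})=\underline{h}$, i.e., that the extended $\underline{T}$ is continuous along the specific sequence $(h_n)_n$. On $A$ both sides vanish, so the issue localises to proving $[\underline{T}\underline{h}](x)=\sup_n[\underline{T}h_n](x)$ for $x\in A^c$. The $\geq$ direction is immediate from monotonicity of the extension. For $\leq$, take any $f\in\gambles$ with $0\leq f\leq \underline{h}$ and consider the truncations $f\wedge h_n$. Since $f$ is real-valued and $h_n(x)\uparrow \underline{h}(x)\geq f(x)$ pointwise on the \emph{finite} set $\states$, the sequence $(f\wedge h_n)_n$ is non-decreasing in $\gambles$ and converges uniformly to $f$. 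The $1$-Lipschitz property of $\underline{T}$ then yields $\underline{T}f=\lim_n \underline{T}(f\wedge h_n)\leq \sup_n \underline{T}h_n$, and taking the supremum over admissible $f$ gives $\underline{T}\underline{h}\leq \sup_n \underline{T}h_n$. Plugging this into the recursion for $h_{n+1}$ and taking the supremum in $n$ closes the fixed-point equation.

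For minimality, let $h^\ast\in(\reals\cup\{+\infty\})^\states$ be any non-negative solution of~\eqref{eq:fundamental_system}. Then $h^\ast\geq 0=h_0$, and an induction using $h^\ast=\Phi(h^\ast)$ together with monotonicity of $\Phi$ on non-negative extended-real functions gives $h^\ast\geq h_n$ for every $n$, hence $h^\ast\geq \underline{h}$. Uniqueness of the minimal solution is then automatic: two minimal solutions dominate one another and therefore coincide.
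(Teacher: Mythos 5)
Your proof is correct, and it reconstructs essentially the same iterative argument that the paper defers to the cited reference \cite{krak2019hitting} and that is recapitulated in Proposition~\ref{prop:solutions_by_iteration}: define the monotone operator $\Phi$, iterate from a minimal seed, and show the non-decreasing limit is the least fixed point. (You start from $h_0\equiv 0$ rather than $\mathbb{I}_{A^c}$, but $\Phi(0)=\mathbb{I}_{A^c}$, so the sequences coincide after one step.) The one piece of genuine technical care you supply --- and rightly flag as the main obstacle --- is that $\Phi(\underline{h})=\underline{h}$ is not automatic because $\underline{h}$ may escape $\gambles$, so the extension of $\underline{T}$ to $(\reals\cup\{+\infty\})^\states$ and its continuity along the sequence must be justified. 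Your truncation argument handles this cleanly: for $0\leq f\leq\underline{h}$ in $\gambles$ the truncations $f\wedge h_n$ converge uniformly to $f$ because $\states$ is finite, and the $1$-Lipschitz bound (which you correctly derive from constant additivity, itself a consequence of~\ref{coh:superadd} and~\ref{coh:bounds} applied in both directions) then gives $\underline{T}f=\lim_n\underline{T}(f\wedge h_n)\leq\sup_n\underline{T}h_n$; taking the supremum over admissible $f$ closes the gap. The monotonicity derivation from~\ref{coh:superadd} and~\ref{coh:bounds}, the base case $\underline{T}\,0=0$ via~\ref{coh:nonneghomogen} with $\alpha=0$, and the minimality induction are all sound.
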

So, under our present assumptions the solution is not necessarily unique---although there is a unique \emph{minimal} solution---and the solution can be infinite-valued. It can be shown that in particular $\underline{h}(x)=+\infty$ if and only if moving from $x$ to $A$ in a finite number of steps has upper probability zero. Conjugate to the notion of lower probabilities, this \emph{upper} probability is a tight upper bound on the probability that some event will happen. To exclude this case from our analysis, we will in the sequel assume that $\underline{T}$ also satisfies the \emph{reachability} condition\footnote{Here and in what follows, we take $\nats$ to be the set of natural numbers \emph{without} zero.}
\begin{enumerate}[ref={R\arabic*},label={R\arabic*}.,leftmargin=*]
\item for all $x\in A^c$, there is some $n_x\in\mathbb{N}$ such that $\bigl[\underline{T}^{n_x}\mathbb{I}_A\bigr](x)>0$\,.\label{reach_condition}
\end{enumerate}
Verifying whether a given map $\underline{T}$ satisfies~\ref{reach_condition} can be done e.g. using the algorithm for checking \emph{lower reachability} described in~\cite{DeBock:2016}, simply replacing the map $\underline{Q}$ from that work with the map $\underline{T}$. As that author notes, this algorithm takes $\smash{\mathcal{O}(\abs{\states}^3)}$ time to verify whether $\underline{T}$ satisfies~\ref{reach_condition}. We refer to Section~\ref{sec:complexity} below for details on how $\underline{T}$ may be evaluated in practice.

In any case, we are now ready to state our first main result.
\begin{proposition}\label{prop:existence_is_unique_and_finite}
Let $\underline{T}:\gambles\to\gambles$ be a map that satisfies the coherence conditions~\ref{coh:nonneghomogen}-\ref{coh:bounds} and the reachability condition~\ref{reach_condition}. Then Equation~\eqref{eq:fundamental_system} has a unique solution $\underline{h}$ in $\gambles$, and this solution is non-negative.
\end{proposition}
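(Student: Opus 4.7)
The plan is to leverage Proposition~\ref{prop:basic_existence} and then use~\ref{reach_condition} to upgrade its minimal non-negative solution to a unique, finite-valued one in $\gambles$. I would first observe that any real-valued solution $h\in\gambles$ of~\eqref{eq:fundamental_system} is automatically non-negative: for $x\in A$ the equation forces $h(x)=0$, and if $m\coloneqq\min_{x\in\states}h(x)<0$ were attained at some $x^*$ (necessarily in $A^c$), then~\ref{coh:bounds} would give $h(x^*)=1+[\underline{T}h](x^*)\geq 1+m>m$, a contradiction. Consequently, every real-valued solution dominates the minimal non-negative solution $\underline{h}\in(\reals\cup\{+\infty\})^{\states}$ provided by Proposition~\ref{prop:basic_existence}.

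Next I would show that this $\underline{h}$ is finite-valued. For $x\in A$ this is immediate from the equation. For $x\in A^c$, I would invoke the dichotomy recalled in the paragraph after Proposition~\ref{prop:basic_existence} (established in~\cite{krak2019hitting}), according to which $\underline{h}(x)=+\infty$ holds iff the upper probability of ever reaching $A$ from $x$ is zero. Condition~\ref{reach_condition} supplies an $n_x\in\nats$ with $\bigl[\underline{T}^{n_x}\mathbb{I}_A\bigr](x)>0$, which is a lower---and hence upper---bound on the probability of being in $A$ at time $n_x$, so the upper probability of reaching $A$ in finite time is strictly positive. Hence $\underline{h}(x)<+\infty$ for every $x$, i.e.\ $\underline{h}\in\gambles$.

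For uniqueness in $\gambles$, I would take any other real-valued solution $h'$ and set $d\coloneqq h'-\underline{h}\in\gambles$, which is non-negative by the two preceding steps and vanishes on $A$. Subtracting the two instances of~\eqref{eq:fundamental_system} gives $d=\mathbb{I}_{A^c}\cdot(\underline{T}h'-\underline{T}\underline{h})$. Using~\ref{coh:superadd} to write $\underline{T}\underline{h}=\underline{T}(h'+(-d))\geq\underline{T}h'+\underline{T}(-d)$ and setting $\overline{T}f\coloneqq-\underline{T}(-f)$, I obtain $d\leq\mathbb{I}_{A^c}\cdot\overline{T}d$. The operator $\Phi f\coloneqq\mathbb{I}_{A^c}\cdot\overline{T}f$ is monotone (using~\ref{coh:superadd} and~\ref{coh:bounds}) and non-negatively homogeneous (using~\ref{coh:nonneghomogen}). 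Iterating and using $d\leq(\max_{x\in\states}d(x))\mathbb{I}_{A^c}$ yields $d\leq\bigl(\max_{x\in\states}d(x)\bigr)\Phi^n\mathbb{I}_{A^c}$ for every $n\in\nats$, so it suffices to show $\Phi^n\mathbb{I}_{A^c}\to 0$ pointwise.

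The hard part will be precisely this last convergence. Intuitively, $\Phi^n\mathbb{I}_{A^c}$ upper-bounds the imprecise probability of the chain staying in $A^c$ for $n$ consecutive steps, and~\ref{reach_condition} combined with the finiteness of $\states$ should yield a uniform contraction at the common horizon $n^*\coloneqq\max_{x\in A^c}n_x$: from every starting state in $A^c$ one then has a uniformly positive lower probability $\varepsilon>0$ of having visited $A$ by time $n^*$, which should translate into an inequality of the form $\Phi^{n^*}\mathbb{I}_{A^c}\leq(1-\varepsilon)\mathbb{I}_{A^c}$. Iterating this bound would show $\Phi^n\mathbb{I}_{A^c}\to 0$ geometrically, whence $d=0$ and $h'=\underline{h}$, completing the proof.
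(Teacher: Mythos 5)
Your overall plan is genuinely different from the paper's: you avoid the dual representation $\mathcal{T}$, linear sub-systems, and the spectral-radius/resolvent machinery of Section~2 entirely, working instead directly with the coherence axioms and the nonlinear operator $\Phi f \coloneqq \mathbb{I}_{A^c}\cdot\overline{T}f$. Your opening observation (that any real solution in $\gambles$ is automatically non-negative, by \ref{coh:bounds} applied at the minimizer) is clean, correct, and more elementary than anything in the paper's argument. If filled in, this route would give a proof that does not even require Proposition~\ref{prop:dual_set_representation}, which is a genuine gain in self-containment. The paper's route, by contrast, buys reusable structural results (Lemmas~\ref{lemma:lower_reachability_is_precise_reachability}--\ref{lemma:submatrix_powers_vanish}, Proposition~\ref{prop:solution_of_linear_system}) that it needs again in Propositions~\ref{prop:algorithm_works} and~\ref{prop:solution_of_linear_system}.

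However, you explicitly flag and then only sketch the step that carries the whole argument, namely $\Phi^{n}\mathbb{I}_{A^c}\to 0$. This is not a routine gap: condition~\ref{reach_condition} is phrased in terms of the \emph{lower} operator $\underline{T}$, while $\Phi$ is built from the \emph{upper} operator $\overline{T}$, and the translation between them is the crux. It can be done without the dual set, but it needs the constant-additivity property~\ref{transprop:constant_additive}, which your sketch does not invoke. Concretely: write $r_n\coloneqq \mathbb{I}_{A^c}-\Phi^n\mathbb{I}_{A^c}$ (which vanishes on $A$); using $\overline{T}f=-\underline{T}(-f)$, $\mathbb{I}_{A^c}=1-\mathbb{I}_A$, and~\ref{transprop:constant_additive}, one gets the recursion $r_{n+1}=\mathbb{I}_{A^c}\cdot\underline{T}(r_n+\mathbb{I}_A)$, and then, by~\ref{transprop:bounds} and~\ref{transprop:monotone} in an induction, $r_n\geq \mathbb{I}_{A^c}\cdot\underline{T}^n\mathbb{I}_A$. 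Combined with the monotone decrease of $\Phi^n\mathbb{I}_{A^c}$ (from~\ref{concoh:bounds}) and the finiteness of $A^c$, this delivers your $\Phi^{n^*}\mathbb{I}_{A^c}\leq(1-\varepsilon)\mathbb{I}_{A^c}$, and then geometric decay via~\ref{concoh:nonneghomogen}. Without this derivation, the claim "should translate into an inequality of the form\ldots" is not yet a proof.

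A second, smaller gap: for finiteness of $\underline{h}$ you appeal to the dichotomy "$\underline{h}(x)=+\infty$ iff the upper hitting probability is zero," which the paper mentions informally but does not prove, and which the paper's own proof of Proposition~\ref{prop:existence_is_unique_and_finite} carefully avoids (it goes through Propositions~\ref{prop:solutions_are_bounds_and_reached} and~\ref{prop:solution_of_linear_system} instead). As written, this leans on an external fact in a way that the paper's argument does not. This is avoidable within your own framework: once you have $\sum_{k\geq 0}\Phi^k\mathbb{I}_{A^c}<+\infty$, an induction using~\ref{concoh:subadd} shows that the iterates $\underline{h}_n$ of Proposition~\ref{prop:solutions_by_iteration} satisfy $\underline{h}_n\leq\sum_{k=0}^{n}\Phi^k\mathbb{I}_{A^c}$, giving finiteness of $\underline{h}=\lim_n\underline{h}_n$ directly. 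I would recommend this route so the proof stands on the coherence axioms alone.
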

The proof of this result requires some setup, and is given in Section~\ref{sec:existence}. For now, we note that the unique solution $\underline{h}$ in $\gambles$ that Proposition~\ref{prop:existence_is_unique_and_finite} talks about, is also the (unique) minimal non-negative solution in $(\reals\cup\{+\infty\})^{\states}$; hence in particular, we can apply the interpretation of $\underline{h}$ as the vector of lower expected hitting times for an imprecise Markov chain, as discussed above. The following formalizes this.
\begin{corollary}\label{cor:unique_solution_is_minimal}
Let $\underline{T}:\gambles\to\gambles$ be a map that satisfies the coherence conditions~\ref{coh:nonneghomogen}-\ref{coh:bounds}. If Equation~\eqref{eq:fundamental_system} has a unique solution $\underline{h}$ in $\gambles$, and if $\underline{h}$ is non-negative, then $\underline{h}$ is the minimal non-negative solution of Equation~\eqref{eq:fundamental_system} in $(\reals\cup\{+\infty\})^{\states}$.
\end{corollary}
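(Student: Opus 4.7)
The plan is to leverage Proposition~\ref{prop:basic_existence} to produce a minimal non-negative solution in the extended reals, and then argue that it must coincide with the given unique finite-valued solution $\underline{h}$. The key observation is that the hypothesis of the corollary only asserts uniqueness \emph{within $\gambles$}, while Proposition~\ref{prop:basic_existence} provides a (potentially different) minimal solution $\underline{h}^{\ast}$ over the larger space $(\reals\cup\{+\infty\})^{\states}$; the work is to identify these two.

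First, I would apply Proposition~\ref{prop:basic_existence} to obtain the unique minimal non-negative solution $\underline{h}^{\ast}\in(\reals\cup\{+\infty\})^{\states}$ to Equation~\eqref{eq:fundamental_system}. Since $\underline{h}\in\gambles$ is itself non-negative by hypothesis and $\gambles\subseteq(\reals\cup\{+\infty\})^{\states}$, the minimality property gives $\underline{h}^{\ast}(x)\leq\underline{h}(x)$ for every $x\in\states$. Because $\underline{h}$ is finite-valued, this inequality forces $\underline{h}^{\ast}(x)<+\infty$ for every $x\in\states$, so $\underline{h}^{\ast}$ in fact lies in $\gambles$.

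Having established $\underline{h}^{\ast}\in\gambles$, I would then invoke the uniqueness hypothesis: $\underline{h}^{\ast}$ and $\underline{h}$ are both solutions of Equation~\eqref{eq:fundamental_system} inside $\gambles$, and therefore must be equal. Since $\underline{h}^{\ast}$ is by construction the minimal non-negative solution in $(\reals\cup\{+\infty\})^{\states}$, the same property transfers to $\underline{h}$, which is the conclusion of the corollary.

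There is no serious obstacle to this argument; the only delicate point is ensuring that the minimal solution supplied by Proposition~\ref{prop:basic_existence} is actually finite-valued so that the uniqueness-in-$\gambles$ hypothesis becomes applicable to it. This is precisely what the pointwise bound against the finite $\underline{h}$ provides, and it is arguably the only step in the proof that does any real work.
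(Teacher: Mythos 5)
Your argument is correct and follows the paper's proof essentially verbatim: both introduce the minimal non-negative solution from Proposition~\ref{prop:basic_existence}, use minimality against the finite $\underline{h}$ to force that solution into $\gambles$, and then apply the uniqueness-in-$\gambles$ hypothesis to conclude equality. No differences worth noting.
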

\begin{proof}
Let $g\in(\reals\cup\{+\infty\})^{\states}$ be the (unique) minimal non-negative solution of Equation~\eqref{eq:fundamental_system}, whose existence is guaranteed by Proposition~\ref{prop:basic_existence}. Then $g(x)\leq \underline{h}(x)$ for all $x\in\states$, because $g$ is minimal and because $\underline{h}$ is a non-negative solution to Equation~\eqref{eq:fundamental_system}. Because $\underline{h}\in\gambles$, this implies that $g(x)\in\reals$ for all $x\in\states$, whence $g\in\gambles$, and therefore it follows that $g=\underline{h}$ because $\underline{h}$ is the unique solution of Equation~\eqref{eq:fundamental_system} in $\gambles$. Hence, and because $g$ is the minimal non-negative solution of Equation~\eqref{eq:fundamental_system} in $(\reals\cup\{+\infty\})^{\states}$, so is $\underline{h}$. 
\qed
\end{proof}

Having established the existence of a unique real-valued solution to the system, our second main result is a numerical method for computing this solution. For comparison, in~\cite{krak2019hitting} the authors present an iterative method that can be directly applied as a computational tool---see Proposition~\ref{prop:solutions_by_iteration} in Section~\ref{sec:existence}---but which is only asymptotically exact, and whose runtime scales with $\lVert\underline{h}\rVert_{\infty}$, making it impractical when (some of) the expected hitting times are numerically large. In contrast, the novel method that we present in Section~\ref{sec:computational_method} is independent of $\lVert\underline{h}\rVert_{\infty}$, and we show that it converges to the correct solution in a finite number of steps under practically realistic assumptions on $\underline{T}$ (but ignoring numerical issues with finite-precision implementations).


\section{Existence of Solutions}\label{sec:existence}

The space $\gambles$ is endowed with the supremum norm, i.e. $\norm{f}\coloneqq \norm{f}_\infty\coloneqq \max_{x\in\states}\abs{f(x)}$ for all $f\in\gambles$. Mappings $M:f\mapsto Mf$ from $\gambles$ to $\gambles$ receive the induced \emph{operator norm} $\norm{M}\coloneqq \sup\{\norm{Mf}\,:\,f\in\gambles,\norm{f}\leq 1\}$. Such a map $M$ is called \emph{bounded} if it maps (norm-)bounded sets to (norm-)bounded sets; if the map is non-negatively homogeneous (i.e. if it satisfies Property~\ref{coh:nonneghomogen}) then it is bounded if and only if $\norm{M}<+\infty$. Note that this includes, as a special case, that \emph{linear} maps are bounded if and only if their norm is bounded.

An element $f\in\gambles$ that is identically equal to some $\mu\in\reals$, i.e. $f(x)=\mu$ for all $x\in\states$, is simply written as $\mu\in\gambles$. For any $f,g\in\gambles$, we take $f\leq g$ to mean that $f(x)\leq g(x)$ for all $x\in\states$. For any $B\subseteq\states$ we define the indicator $\mathbb{I}_B\in\gambles$ of $B$, for all $x\in\states$, as $\mathbb{I}_B(x)\coloneqq 1$ if $x\in B$, and $\mathbb{I}_B(x)\coloneqq 0$, otherwise.

For a given map $\underline{T}$ that satisfies~\ref{coh:nonneghomogen}-\ref{coh:bounds}, we introduce the \emph{conjugate} map $\overline{T}:\gambles\to\gambles$ that is defined, for all $f\in\gambles$, as $\overline{T}\,f\coloneqq -\underline{T}(-f)$. It is easily verified that this map satisfies the (conjugate) coherence conditions
\begin{enumerate}[ref=\upshape{UC\arabic*},label=\upshape{UC\arabic*}.,leftmargin=*]
\item $\overline{T}\,(\alpha f)=\alpha\overline{T}\,f$ for all $f\in\gambles$ and $\alpha\in\realsnonneg$;\hfill (\emph{non-negative homogeneity})\label{concoh:nonneghomogen}
\item $\overline{T}\,(f+g) \leq \overline{T}\,f + \overline{T}\,g$ for all $f,g\in\gambles$;\hfill (\emph{sub-additivity})\label{concoh:subadd}
\item $\overline{T}\,f\leq \max_{x\in\states}f(x)$ for all $f\in\gambles$.\hfill (\emph{upper bounds})\label{concoh:bounds}
\end{enumerate}
This map $\overline{T}$ is known as the \emph{upper} transition operator in the context of imprecise Markov chains, and yields, in analogy to Equation~\eqref{eq:interpretation_lower_trans}, a tight upper bound on the probability of moving between states. Note that the map $\underline{T}$ is a linear map if and only if $\underline{T}\,f=\overline{T}\,f$ for all $f\in\gambles$.
$\overline{T}$ gives rise to the system
\begin{equation}\label{eq:fundamental_system_conjugate}
\overline{h} = \mathbb{I}_{A^c} + \mathbb{I}_{A^c}\cdot\overline{T}\,\overline{h}\,,
\end{equation}
whose minimal non-negative solution $\overline{h}$ can be interpreted as a tight \emph{upper} bound on the expected hitting times of a Markov chain with imprecise probabilities. It follows that in particular $\underline{h}\leq \overline{h}$; we refer to~\cite{decooman2016impreciseprocesses,krak2019hitting} for further details.

The following properties are well-known to hold for maps that satisfy~\ref{coh:nonneghomogen}-\ref{coh:bounds}; we state them here for convenience. Reference~\cite{krak2019extendedhitting} happens to contain all of these, but most of the references on imprecise Markov chains state at least some of them. Here and in what follows, for any $n\in\nats$, we use $M^n$ to denote the $n$-fold composition of a map $M:\gambles\to\gambles$ with itself.
\begin{lemma}
Let $\underline{T}:\gambles\to\gambles$ be a map that satisfies~\ref{coh:nonneghomogen}-\ref{coh:bounds}, and let $\overline{T}$ be its conjugate map. Then, for all $f,g\in\gambles$ and all $n\in\nats$, it holds that
\begin{enumerate}[ref=\upshape{T\arabic*},label=\upshape{T\arabic*}.,leftmargin=*]
	\item $\min_{x\in\states}f(x) \leq \underline{T}^nf\leq \overline{T}^nf\leq \max_{x\in\states} f(x)$;\hfill (\emph{bounds})\label{transprop:bounds}
	\item $f\leq g \Rightarrow \underline{T}^nf\leq \underline{T}^ng$\hfill (\emph{monotonicity})\label{transprop:monotone}
	\item $\underline{T}^n(f+\mu) = \underline{T}^nf + \mu$ for all constant $\mu\in\gambles$\hfill (\emph{constant additivity})\label{transprop:constant_additive}
	\item $\norm{\underline{T}^nf-\underline{T}^ng}\leq \norm{f-g}$\hfill (\emph{non-expansiveness})\label{transprop:nonexpand}
\end{enumerate}
\end{lemma}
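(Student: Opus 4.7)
The plan is to first establish each property for $n=1$ directly from the coherence conditions~\ref{coh:nonneghomogen}--\ref{coh:bounds} (and their conjugate versions~\ref{concoh:nonneghomogen}--\ref{concoh:bounds}), and then lift to arbitrary $n\in\nats$ by an essentially trivial induction on the $n$-fold composition. The order I would carry things out is T2 (monotonicity), then T3 (constant additivity), then T1 (bounds), and finally T4 (non-expansiveness), because the later properties use the earlier ones.

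For the $n=1$ case, the main observations are as follows. From~\ref{coh:nonneghomogen} with $\alpha=0$ we get $\underline{T}\,0=0$, and combining~\ref{coh:bounds} applied to $f=\mu$ with its conjugate~\ref{concoh:bounds} applied to the same $\mu$ (together with the inequality $\underline{T}\,\mu\leq\overline{T}\,\mu$, obtained by applying~\ref{coh:superadd} to $\mu$ and $-\mu$) yields $\underline{T}\,\mu=\mu$ for every constant $\mu\in\reals$. Monotonicity for $n=1$ then follows by writing $g=(g-f)+f$, applying~\ref{coh:superadd}, and using~\ref{coh:bounds} on the non-negative term $g-f$. Constant additivity for $n=1$ follows by combining~\ref{coh:superadd} applied once as $\underline{T}(f+\mu)\geq\underline{T}\,f+\mu$ and once in the reverse direction $\underline{T}\,f=\underline{T}((f+\mu)+(-\mu))\geq\underline{T}(f+\mu)-\mu$ (using $\underline{T}(-\mu)=-\mu$). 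The bounds in T1 for $n=1$ combine~\ref{coh:bounds} and~\ref{concoh:bounds} with the already-mentioned inequality $\underline{T}\,f\leq\overline{T}\,f$. Non-expansiveness for $n=1$ then drops out of T2 and T3 via the standard two-sided bound $f\leq g+\norm{f-g}$, applying $\underline{T}$ and swapping the roles of $f,g$.

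The induction on $n$ is straightforward. For T2 and T3, one writes $\underline{T}^{n+1}=\underline{T}\circ\underline{T}^n$ and applies the inductive hypothesis followed by the $n=1$ case. For T1, the inductive step requires monotonicity of both $\underline{T}$ and $\overline{T}$; the latter is obtained by the same argument as for $\underline{T}$ applied to the conjugate conditions, or equivalently by conjugation from T2 for $\underline{T}$. Then one chains $\underline{T}^{n+1}f=\underline{T}(\underline{T}^n f)\leq\underline{T}(\overline{T}^n f)\leq\overline{T}(\overline{T}^n f)=\overline{T}^{n+1}f$, with the outer bounds handled similarly by iterating the $n=1$ bounds. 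T4 for general $n$ follows either by iterating the $n=1$ case, or by repeating the T2/T3 argument using $\underline{T}^n$ in place of $\underline{T}$ (both are valid because we have already proved T2 and T3 for $\underline{T}^n$).

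I do not anticipate any substantive obstacle: the entire argument is a bookkeeping exercise in which the only mildly subtle step is the derivation of $\underline{T}\,\mu=\mu$ for constants (which hinges on the often implicit fact that $\underline{T}\leq\overline{T}$ pointwise), and the careful handling of the two directions in the proof of constant additivity for possibly negative $\mu$. Everything else is an immediate consequence of~\ref{coh:nonneghomogen}--\ref{coh:bounds} and their conjugates, combined with the inductive definition of $\underline{T}^n$.
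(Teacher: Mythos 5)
The paper does not give a proof of this lemma; it simply states that the properties are well-known and cites an external reference (\cite{krak2019extendedhitting}). Your proof is correct, complete, and is exactly the standard derivation one would expect from the coherence axioms. In particular, the key preparatory observations are all properly in place: $\underline{T}\,0=0$ from~\ref{coh:nonneghomogen} with $\alpha=0$; $\underline{T}\,f\leq\overline{T}\,f$ via $\underline{T}\,f+\underline{T}(-f)\leq\underline{T}\,0=0$ from~\ref{coh:superadd}; and $\underline{T}\,\mu=\mu$ for all constants (including negative ones, which you correctly handle by sandwiching between~\ref{coh:bounds} and~\ref{concoh:bounds} rather than attempting to factor out $\mu$ via~\ref{coh:nonneghomogen}, which would fail for $\mu<0$). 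The derivations of T2 and T3 for $n=1$ and the two-sided bound that yields T4 from T2 and T3 are all sound, and the inductive lifting to general $n$ via $\underline{T}^{n+1}=\underline{T}\circ\underline{T}^n$ is routine as you say. No gaps.
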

\begin{corollary}\label{cor:lower_trans_is_bounded}
Let $\underline{T}:\gambles\to\gambles$ be a map that satisfies~\ref{coh:nonneghomogen}-\ref{coh:bounds}. Then $\underline{T}$ is bounded.
\end{corollary}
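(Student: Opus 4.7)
The plan is to verify the corollary essentially by unpacking definitions and invoking property~\ref{transprop:bounds} for $n=1$. Since the paper has noted just before the lemma that a non-negatively homogeneous map is bounded if and only if its operator norm is finite, and since $\underline{T}$ satisfies~\ref{coh:nonneghomogen}, it suffices to show $\norm{\underline{T}}<+\infty$.

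First I would take an arbitrary $f\in\gambles$ with $\norm{f}\leq 1$. By definition of the supremum norm this means $-1\leq f(x)\leq 1$ for all $x\in\states$, hence $-1\leq\min_{x\in\states}f(x)$ and $\max_{x\in\states}f(x)\leq 1$. Applying property~\ref{transprop:bounds} with $n=1$ then gives, pointwise,
\begin{equation*}
-1\leq\min_{x\in\states}f(x)\leq \underline{T}\,f\leq\max_{x\in\states}f(x)\leq 1,
\end{equation*}
so $\norm{\underline{T}\,f}\leq 1$. Taking the supremum over all such $f$ yields $\norm{\underline{T}}\leq 1<+\infty$.

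Since $\underline{T}$ is non-negatively homogeneous by~\ref{coh:nonneghomogen}, the finiteness of its operator norm is equivalent to boundedness, as remarked at the start of Section~\ref{sec:existence}, which concludes the proof. There is no real obstacle here: the only subtlety is that the notion of boundedness used in the paper is the ``maps bounded sets to bounded sets'' one rather than the linear-operator notion, but the cited equivalence under~\ref{coh:nonneghomogen} immediately bridges the gap.
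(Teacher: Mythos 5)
Your proof is correct and follows essentially the same route as the paper's: both apply Property~\ref{transprop:bounds} with $n=1$ to show $\norm{\underline{T}\,f}\leq\norm{f}\leq 1$ for any unit-norm $f$, conclude $\norm{\underline{T}}\leq 1$, and then invoke the remark that finite operator norm plus non-negative homogeneity gives boundedness. The only cosmetic difference is that you spell out that last equivalence a bit more explicitly.
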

\begin{proof}
Let $f\in\gambles$ be such that $\norm{f}\leq 1$, and fix any $x\in\states$. It then follows from Property~\ref{transprop:bounds} and the definition of $\norm{f}$ that $-\norm{f} \leq [\underline{T}\,f](x) \leq \norm{f}$, and hence $\abs{[\underline{T}\,f](x)}\leq \norm{f}\leq 1$. Because this is true for any $x\in\states$, it follows that $\norm{\underline{T}\,f}\leq 1$. Because this is true for any $f\in\gambles$ with $\norm{f}\leq 1$, it follows from the definition of the operator norm that $\norm{\underline{T}}\leq 1$, whence $\underline{T}$ is bounded.
\qed
\end{proof}

The following result provides an iterative method to find the minimal non-negative solution(s) to the, in general, non-linear systems~\eqref{eq:fundamental_system} and~\eqref{eq:fundamental_system_conjugate}. 
\begin{proposition}[\cite{krak2019hitting}]\label{prop:solutions_by_iteration}
Let $\underline{T}:\gambles\to\gambles$ be a map that satisfies~\ref{coh:nonneghomogen}-\ref{coh:bounds}, and let $\overline{T}$ be its conjugate map. Define $\underline{h}_0\coloneqq \overline{h}_0\coloneqq \mathbb{I}_{A^c}$ and, for all $n\in\nats$, let $\underline{h}_n\coloneqq \mathbb{I}_{A^c}+\mathbb{I}_{A^c}\cdot\underline{T}\,\underline{h}_{n-1}$ and $\overline{h}_n\coloneqq \mathbb{I}_{A^c}+\mathbb{I}_{A^c}\cdot\overline{T}\,\overline{h}_{n-1}$. Then $\underline{h}_*\coloneqq \lim_{n\to+\infty} \underline{h}_n$ and $\overline{h}_*\coloneqq \lim_{n\to+\infty} \overline{h}_n$ exist in $(\reals\cup\{+\infty\})^{\states}$. Moreover, $\underline{h}_*$ is the minimal non-negative solution to Equation~\eqref{eq:fundamental_system}, and $\overline{h}_*$ is the minimal non-negative solution to Equation~\eqref{eq:fundamental_system_conjugate}.
\end{proposition}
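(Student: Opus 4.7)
The plan is to prove both claims in parallel, since the arguments for $\underline{h}_*$ and $\overline{h}_*$ are entirely symmetric; I focus on the lower sequence. The overall strategy has three parts: monotone convergence of the iteration, identification of the limit as a fixed point, and minimality.

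First, I would establish by induction that $(\underline{h}_n)_{n \in \natswith}$ is pointwise non-decreasing. The base case $\underline{h}_1 \geq \underline{h}_0 = \mathbb{I}_{A^c}$ uses Property~\ref{transprop:bounds} to get $\underline{T}\,\mathbb{I}_{A^c} \geq 0$, so $\mathbb{I}_{A^c} \cdot \underline{T}\,\underline{h}_0 \geq 0$. For the inductive step, Property~\ref{transprop:monotone} propagates $\underline{h}_n \geq \underline{h}_{n-1}$ through $\underline{T}$, and multiplication by the non-negative indicator $\mathbb{I}_{A^c}$ preserves the inequality. Since each coordinate $\underline{h}_n(x)$ is then non-decreasing and bounded below by zero, the pointwise limit $\underline{h}_*(x) \in \realsnonneg \cup \{+\infty\}$ exists.

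Next, I would verify that $\underline{h}_*$ satisfies Equation~\eqref{eq:fundamental_system}. In the case where $\underline{h}_*$ is finite-valued, monotone convergence on the finite state space $\states$ upgrades pointwise convergence to convergence in supremum norm, and non-expansiveness~\ref{transprop:nonexpand} then gives $\underline{T}\,\underline{h}_n \to \underline{T}\,\underline{h}_*$, so passing to the limit in the recursion yields the fixed-point equation directly. The main obstacle, which I expect to be the only nontrivial step, is the case where $\underline{h}_*(x) = +\infty$ for some $x$: then $\underline{h}_* \notin \gambles$, so $\underline{T}\,\underline{h}_*$ is not a priori defined. I would resolve this by extending $\underline{T}$ to $(\reals \cup \{+\infty\})^\states$ via the upward limit $[\underline{T}\,\underline{h}_*](x) \coloneqq \lim_{n \to \infty} [\underline{T}\,\underline{h}_n](x)$, which is well-defined by the monotonicity already established and is readily shown to be independent of the approximating sequence. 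A short check then confirms this extension is compatible with the recursion, so taking limits coordinatewise in $(\reals \cup \{+\infty\})^\states$ gives the fixed-point equation in the extended sense, with both sides equal to $+\infty$ on the set where $\underline{h}_*$ is infinite.

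Finally, minimality follows by a parallel induction. For any non-negative $h \in (\reals \cup \{+\infty\})^\states$ that solves Equation~\eqref{eq:fundamental_system}, Property~\ref{transprop:bounds} applied to $h$ gives $\underline{T}\,h \geq 0$, hence $h \geq \mathbb{I}_{A^c} = \underline{h}_0$; the inductive step then uses Property~\ref{transprop:monotone} to propagate $\underline{h}_n \leq h$ through the recursion to $\underline{h}_{n+1} \leq h$. Taking $n \to \infty$ shows $\underline{h}_* \leq h$, so $\underline{h}_*$ is minimal. The same scheme applied to $\overline{T}$, using the conjugate coherence conditions~\ref{concoh:nonneghomogen}--\ref{concoh:bounds} together with the bounds, monotonicity, and non-expansiveness that $\overline{T}$ inherits via the identity $\overline{T}\,f = -\underline{T}(-f)$, delivers the corresponding conclusions for $\overline{h}_*$.
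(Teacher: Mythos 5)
The paper does not prove this proposition itself; it simply cites \cite[Proposition 10, Theorem 12, and Corollary 13]{krak2019hitting}, so there is no in-paper argument to compare against. Your proposal therefore has to be judged on its own merits, and it is sound: the monotone-iteration scheme (show the iterates are non-decreasing and bounded below, pass to the limit to get a fixed point, then show any non-negative solution dominates every iterate) is exactly the kind of argument one would expect the cited reference to use. Your use of~\ref{transprop:bounds} for the base case, \ref{transprop:monotone} for the inductive steps, and~\ref{transprop:nonexpand} to pass to the limit in the finite-valued case is all correct, and the observation that pointwise convergence on a finite $\states$ is already sup-norm convergence is the right shortcut.

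The one step that is genuinely not trivial, and which you correctly flag as the crux, is handling $\underline{h}_*\notin\gambles$. Two points there deserve more than a sketch. First, the extension $[\underline{T}\,h](x)\coloneqq\lim_k[\underline{T}\,h^{(k)}](x)$ (with $h^{(k)}$ a non-decreasing finite approximation) must be shown independent of the approximating sequence; this is provable using~\ref{transprop:monotone}, \ref{transprop:constant_additive}, and the finiteness of $\states$ (for any two non-decreasing approximants $(f_n),(g_m)\uparrow h$, each $f_n-\epsilon$ is eventually dominated by some $g_m$, whence $\lim\underline{T}f_n\le\lim\underline{T}g_m$ and vice versa), but it is an actual lemma, not a remark. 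Second, in the minimality argument you apply the extended $\underline{T}$ to an arbitrary non-negative solution $h\in(\reals\cup\{+\infty\})^{\states}$ and invoke monotonicity of the extension; this too is fine but needs the same machinery. In short, your argument is correct and well-targeted, but since the paper leaves the meaning of Equation~\eqref{eq:fundamental_system} for extended-real-valued $h$ implicit (it is inherited from~\cite{krak2019hitting} via Proposition~\ref{prop:basic_existence}), a self-contained proof would have to make that semantics explicit and verify that your limit-based extension agrees with it.
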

\begin{proof}
This follows from~\cite[Proposition 10, Theorem 12, and Corollary 13]{krak2019hitting}.\qed
\end{proof}
The scheme proposed in this result can also be directly applied as a computational method, the complexity of which we will analyse and compare to our proposed method, in Section~\ref{sec:complexity}.

We need the following, somewhat abstract, definition, that is often encountered in the context of Markov chains with imprecise probabilities.
\begin{definition}
Let $\mathcal{T}$ be a set of maps from $\gambles$ to $\gambles$. For any map $T:\gambles\to\gambles$ and any $x\in\states$, let $T_x:\gambles\to\reals:f\mapsto[Tf](x)$. Then we say that $\mathcal{T}$ has \emph{separately specified rows}, if $\mathcal{T} = \bigl\{T:\gambles\to\gambles\,\big\vert\,\forall x\in\states: T_x\in\mathcal{T}_x\bigr\}$ where, for all $x\in\states$, $\mathcal{T}_x\coloneqq \{T_x\,\vert\,T\in\mathcal{T}\}$.
\end{definition}
This definition is needed to obtain the following well-known dual representation of $\underline{T}$, of which we will make extensive use throughout the remainder of this work. 
Because the result is well-known (see e.g.~\cite{decooman:2009:markovlimit,itip:stochasticprocesses}), we omit the proof here.
\begin{proposition}\label{prop:dual_set_representation}
Let $\underline{T}:\gambles\to\gambles$ be a map that satisfies~\ref{coh:nonneghomogen}-\ref{coh:bounds}. Then there is a unique non-empty, closed, and convex set $\mathcal{T}$ that has separately specified rows, such that each $T\in\mathcal{T}$ is a linear map from $\gambles$ to $\gambles$ that satisfies~\ref{coh:nonneghomogen}-\ref{coh:bounds}, and such that, for all $f\in\gambles$, $\underline{T}\,f=\inf_{T\in\mathcal{T}}Tf$. Moreover, for all $f\in\gambles$ there is some $T\in\mathcal{T}$ such that $\underline{T}\,f=Tf$.
\end{proposition}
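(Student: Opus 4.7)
The plan is to reduce the statement to the classical envelope theorem for coherent lower previsions on $\gambles$, applied componentwise (``row by row'') to $\underline{T}$, and then assemble the rows back together using the separate specification structure.

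First I would fix $x\in\states$ and consider the functional $\underline{T}_x:\gambles\to\reals:f\mapsto[\underline{T}\,f](x)$. A direct check shows that $\underline{T}_x$ inherits non-negative homogeneity from~\ref{coh:nonneghomogen}, super-additivity from~\ref{coh:superadd}, and the bound $\min_{y\in\states}f(y)\leq \underline{T}_x f$ from~\ref{coh:bounds}. Thus $\underline{T}_x$ is a coherent lower prevision on the finite-dimensional space $\gambles$. The classical envelope theorem for coherent lower previsions on finite state spaces then furnishes a non-empty, closed, and convex set $\mathcal{T}_x$ of linear functionals on $\gambles$, each dominating $\underline{T}_x$ and each corresponding to a probability mass function on $\states$ (hence satisfying~\ref{coh:nonneghomogen}-\ref{coh:bounds}), such that $\underline{T}_x f=\inf_{L\in\mathcal{T}_x}Lf$ for all $f\in\gambles$. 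Because $\gambles$ is finite-dimensional and $\mathcal{T}_x$ is compact (closed and bounded, since each probability mass function lies in a simplex), the infimum is attained by some $L^f_x\in\mathcal{T}_x$ for every $f\in\gambles$.

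Next, I would define $\mathcal{T}\coloneqq \bigl\{T:\gambles\to\gambles\,\big\vert\,\forall x\in\states:T_x\in\mathcal{T}_x\bigr\}$. By construction $\mathcal{T}$ has separately specified rows, each $T\in\mathcal{T}$ is linear and satisfies~\ref{coh:nonneghomogen}-\ref{coh:bounds} (because each row $T_x$ does so), and $\mathcal{T}$ is non-empty, closed, and convex as a product of non-empty, closed, convex sets (identifying $T$ with the tuple $(T_x)_{x\in\states}$ in $\prod_{x\in\states}\mathcal{T}_x$ equipped with the appropriate product topology). For the envelope identity, fix $f\in\gambles$; then for any $T\in\mathcal{T}$ and any $x\in\states$, $[Tf](x)=T_x f\geq \underline{T}_x f=[\underline{T}\,f](x)$, so $Tf\geq \underline{T}\,f$, while the map $T^f$ with rows $T^f_x\coloneqq L^f_x$ lies in $\mathcal{T}$ and satisfies $T^f f=\underline{T}\,f$. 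This yields $\underline{T}\,f=\inf_{T\in\mathcal{T}}Tf$ with the infimum attained.

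For uniqueness, suppose $\mathcal{T}'$ is another set with the stated properties. Then for each $x\in\states$ its $x$-row set $\mathcal{T}'_x$ is a non-empty, closed, convex set of coherent linear functionals whose lower envelope is $\underline{T}_x$. By the uniqueness part of the envelope theorem for coherent lower previsions on a finite space, $\mathcal{T}'_x=\mathcal{T}_x$ for every $x$, and the separate-specification structure then forces $\mathcal{T}'=\mathcal{T}$.

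I expect the only non-routine ingredient to be the envelope theorem for $\underline{T}_x$ in Step~1 (a Hahn--Banach / separation argument for coherent lower previsions, together with the identification of coherent linear functionals on $\gambles$ with probability mass functions on $\states$); however, since this is a standard result in the imprecise-probability literature (cf.~the references cited by the author), the rest of the proof is essentially bookkeeping that lifts the row-wise envelope representation to an operator-level representation via the separate-specification construction.
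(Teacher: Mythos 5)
The paper does not actually prove this proposition: it states that the result is well known and defers to the references (\cite{decooman:2009:markovlimit,itip:stochasticprocesses}), so there is no in-paper proof to compare against. Your proposal follows the standard route taken in that literature---reduce row by row to the lower envelope theorem for coherent lower previsions, obtain a credal set $\mathcal{T}_x$ for each row functional $\underline{T}_x$, assemble these into the separately specified set $\mathcal{T}=\{T:\forall x\in\states,\,T_x\in\mathcal{T}_x\}$, and use compactness of each $\mathcal{T}_x$ (a closed subset of the probability simplex in $\reals^{\states}$, since $\gambles$ is finite-dimensional) to convert the infimum into a minimum---so the overall structure is sound and is what the cited works do.

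Two details deserve to be spelled out rather than waved at. First, in the uniqueness step you appeal to ``the uniqueness part of the envelope theorem'' to conclude $\mathcal{T}'_x=\mathcal{T}_x$, but what is actually required is that a non-empty, closed, convex subset of the simplex is uniquely determined by its support function, which is a separating-hyperplane argument; many textbook versions of the envelope theorem only assert that $\mathcal{M}(\underline{T}_x)$ is the \emph{largest} closed convex set with lower envelope $\underline{T}_x$, which on its own would not rule out a strictly smaller $\mathcal{T}'_x$ with the same envelope. Second, to pass from ``$\mathcal{T}'$ is closed'' to ``each projection $\mathcal{T}'_x$ is closed'' you should invoke compactness (boundedness inside a product of simplices), since projections of merely closed sets need not be closed. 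With those two points made explicit, your proof is correct and is essentially the argument the paper is implicitly relying on.
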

Note that the elements $T\in\mathcal{T}$ of the dual representation $\mathcal{T}$ of $\underline{T}$ are linear maps from $\gambles$ to $\gambles$ that all satisfy \ref{coh:nonneghomogen}-\ref{coh:bounds}. Therefore, as noted in Section~\ref{sec:introduction}, these maps can be represented as $\lvert\states\rvert\times \lvert\states\rvert$ matrices that are row-stochastic.

The dual representation $\mathcal{T}$ of $\underline{T}$ is also conveniently connected to the conjugate map $\overline{T}$, as follows. We omit the proof, which is straightforward.
\begin{corollary}
Let $\underline{T}:\gambles\to\gambles$ be a map that satisfies~\ref{coh:nonneghomogen}-\ref{coh:bounds}, let $\overline{T}$ be its conjugate map, and let $\mathcal{T}$ be its dual representation. Then for all $f\in\gambles$ it holds that $\overline{T}\,f=\sup_{T\in\mathcal{T}}Tf$ and $\overline{T}\,f=Tf$ for some $T\in\mathcal{T}$.
\end{corollary}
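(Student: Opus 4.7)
The plan is to reduce the claim to Proposition~\ref{prop:dual_set_representation} via the conjugacy relation $\overline{T}\,f=-\underline{T}(-f)$, exploiting the linearity of each $T\in\mathcal{T}$. First I would fix an arbitrary $f\in\gambles$ and apply Proposition~\ref{prop:dual_set_representation} to the function $-f$, which yields $\underline{T}(-f)=\inf_{T\in\mathcal{T}} T(-f)$. Since every $T\in\mathcal{T}$ is linear, $T(-f)=-Tf$, so
\[
\underline{T}(-f)=\inf_{T\in\mathcal{T}}(-Tf)=-\sup_{T\in\mathcal{T}} Tf.
\]
Substituting into the definition of $\overline{T}$ gives $\overline{T}\,f=-\underline{T}(-f)=\sup_{T\in\mathcal{T}} Tf$, which is the first claim.

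Next I would establish that this supremum is attained. The ``moreover'' part of Proposition~\ref{prop:dual_set_representation}, applied to $-f$, guarantees the existence of some $T\in\mathcal{T}$ such that $\underline{T}(-f)=T(-f)$. Using linearity again, $T(-f)=-Tf$, and so $\overline{T}\,f=-\underline{T}(-f)=-T(-f)=Tf$ for this particular $T\in\mathcal{T}$, proving the second claim.

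There is no real obstacle here: the entire argument is a direct transcription of the dual representation for $\underline{T}$ through the involution $f\mapsto -f$, which is made trivial by the linearity of the elements of $\mathcal{T}$. The only point worth a passing remark is that the result does not rely on any additional structural hypothesis (such as reachability), being a purely algebraic consequence of Proposition~\ref{prop:dual_set_representation}.
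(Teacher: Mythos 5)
Your argument is correct and is precisely the ``straightforward'' proof that the paper alludes to but omits: conjugacy $\overline{T}\,f=-\underline{T}(-f)$ together with linearity of each $T\in\mathcal{T}$ turns Proposition~\ref{prop:dual_set_representation} applied to $-f$ into both claims. Nothing to add.
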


We note that, because the elements $T\in\mathcal{T}$ of the dual representation $\mathcal{T}$ of $\underline{T}$ are linear maps from $\gambles$ to $\gambles$ that satisfy \ref{coh:nonneghomogen}-\ref{coh:bounds}, the minimal non-negative solutions $h_T$ of the linear system $h_T=\mathbb{I}_{A^c}+\mathbb{I}_{A^c}\cdot Th_T$ exist due to Proposition~\ref{prop:basic_existence}. As established in Section~\ref{sec:introduction}, these $h_T$ can be interpreted as the vectors of expected hitting times of Markov chains parameterised by $T\in\mathcal{T}$.

The next result is established in~\cite{krak2019hitting}, and formalises the interpretation of the solutions to the systems~\eqref{eq:fundamental_system} and~\eqref{eq:fundamental_system_conjugate} as providing bounds on the expected hitting times for the set of Markov chains induced by the elements $T$ of $\mathcal{T}$. 
\begin{proposition}[\cite{krak2019hitting}]\label{prop:solutions_are_bounds_and_reached}
Let $\underline{T}:\gambles\to\gambles$ be a map that satisfies~\ref{coh:nonneghomogen}-\ref{coh:bounds}, let $\overline{T}$ be its conjugate map, and let $\mathcal{T}$ be its dual representation. For any $T\in\mathcal{T}$, let $h_T$ be the minimal non-negative solution of the linear system $h_T=\mathbb{I}_{A^c}+\mathbb{I}_{A^c}\cdot Th_T$. Let $\underline{h}$ and $\overline{h}$ be the minimal non-negative solutions of the systems~\eqref{eq:fundamental_system} and~\eqref{eq:fundamental_system_conjugate}, respectively. Then
\begin{equation*}
\underline{h} = \inf_{T\in\mathcal{T}}h_T\quad\text{and}\quad \overline{h}=\sup_{T\in\mathcal{T}}h_T\,.
\end{equation*}
Moreover, there is some $T\in\mathcal{T}$ such that $\underline{h}=h_T$, and there is some (possibly different) $T\in\mathcal{T}$ such that $\overline{h}=h_T$.
\end{proposition}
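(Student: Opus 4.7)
The plan is to prove the equality $\underline{h}=\inf_{T\in\mathcal{T}}h_T$ together with its attainment in a single combined argument, and then to handle $\overline{h}$ by a symmetric one.

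First I would establish the easy direction $\underline{h}\leq h_T$ for every $T\in\mathcal{T}$ via the iterative scheme from Proposition~\ref{prop:solutions_by_iteration}. Setting $h_{T,0}\coloneqq \mathbb{I}_{A^c}$ and $h_{T,n}\coloneqq \mathbb{I}_{A^c}+\mathbb{I}_{A^c}\cdot Th_{T,n-1}$, so that $h_{T,n}\to h_T$ pointwise by (the linear instance of) Proposition~\ref{prop:solutions_by_iteration}, I would show by induction that $\underline{h}_n\leq h_{T,n}$ for all $n\in\nats$. The base case is trivial; the inductive step combines the pointwise bound $\underline{T}\,f\leq Tf$ (immediate from Proposition~\ref{prop:dual_set_representation}) with the monotonicity of $T$, which holds because each $T\in\mathcal{T}$ itself satisfies~\ref{coh:nonneghomogen}-\ref{coh:bounds} and hence property~\ref{transprop:monotone}. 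Passing to the limit yields $\underline{h}\leq h_T$, and consequently $\underline{h}\leq \inf_{T\in\mathcal{T}}h_T$.

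For the reverse inequality and attainment simultaneously, I would invoke the ``moreover'' part of Proposition~\ref{prop:dual_set_representation} at $f=\underline{h}$ to obtain some $T^{*}\in\mathcal{T}$ with $\underline{T}\,\underline{h}=T^{*}\underline{h}$. Substituting this identity into~\eqref{eq:fundamental_system} shows that $\underline{h}$ is itself a non-negative solution of the linear system $h=\mathbb{I}_{A^c}+\mathbb{I}_{A^c}\cdot T^{*}h$; minimality of $h_{T^{*}}$ therefore yields $h_{T^{*}}\leq \underline{h}$, which combined with the previous paragraph forces $\underline{h}=h_{T^{*}}$. This single identification delivers both $\underline{h}=\inf_{T\in\mathcal{T}}h_T$ and the existence of a minimiser $T^{*}\in\mathcal{T}$. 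The corresponding statement for $\overline{h}$ then follows by an essentially identical argument, using the analogous representation $\overline{T}\,f=\sup_{T\in\mathcal{T}}Tf$ with supremum attained, and reversing the inequalities throughout.

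The main obstacle I anticipate is the possibility that $\underline{h}(x)=+\infty$ at some $x\in\states$, since Proposition~\ref{prop:dual_set_representation} is phrased for $f\in\gambles$ only and does not immediately supply a $T^{*}$ with $\underline{T}\,\underline{h}=T^{*}\underline{h}$ in the extended-real setting. A clean workaround is to extend $\underline{T}$ and each $T\in\mathcal{T}$ to $(\reals\cup\{+\infty\})^{\states}$ by monotone continuity, defining $\underline{T}\,f\coloneqq \lim_{k\to\infty}\underline{T}(f\wedge k)$ and similarly for $T$; property~\ref{transprop:monotone} makes this well-defined, and because $\mathcal{T}$ has separately specified rows on the finite state space $\states$, one can select $T^{*}$ row by row so that the identity $\underline{T}\,\underline{h}=T^{*}\underline{h}$ persists in this extended setting. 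With this extension in place, the argument sketched above goes through without further changes.
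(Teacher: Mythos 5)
The lower half of your argument is sound: the inductive comparison $\underline{h}_n\leq h_{T,n}$ is standard and correct, and your extended-real workaround is adequate. Specifically, for each $x$ and each $k$ one can pick a minimizing row $T_x^{(k)}\in\mathcal{T}_x$ for $\underline{h}\wedge k$; by compactness of $\mathcal{T}_x$ a subsequence converges to some $T^*_x$, and a two-parameter monotone-convergence argument shows $T^*_x(\underline{h})=\lim_k\min_{T_x}T_x(\underline{h}\wedge k)$. Substituting the resulting $T^*$ into~\eqref{eq:fundamental_system} then shows $\underline{h}$ is a non-negative solution of the $T^*$ system, so $h_{T^*}\leq\underline{h}$ by minimality, and the comparison closes the loop. (For the record, the paper offers no proof of this proposition; it is cited directly from the earlier reference.)

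However, your final sentence --- that the statement for $\overline{h}$ ``follows by an essentially identical argument, reversing the inequalities throughout'' --- contains a genuine gap. The comparison step does reverse cleanly: the induction with $Tf\leq\overline{T}\,f$ gives $h_T\leq\overline{h}$ for every $T\in\mathcal{T}$, hence $\sup_{T}h_T\leq\overline{h}$. But the attainment step does not reverse. If you choose $T^*\in\mathcal{T}$ with $\overline{T}\,\overline{h}=T^*\overline{h}$, then $\overline{h}$ is again a non-negative solution of the $T^*$ system, and minimality of $h_{T^*}$ still yields $h_{T^*}\leq\overline{h}$ --- which is the \emph{same} direction as the comparison, not the reverse. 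The key fact you exploited in the lower case was that ``minimal non-negative solution'' is a minimality notion that squeezes $h_{T^*}$ below $\underline{h}$; under the conjugation $\underline{T}\mapsto\overline{T}$ this notion does \emph{not} become a maximality notion, so nothing in your argument delivers the missing inequality $\overline{h}\leq h_{T^*}$. One cannot simply appeal to uniqueness of the $T^*$ system either, since that would require knowing $T^*$ satisfies~\ref{reach_condition} and invoking Proposition~\ref{prop:existence_is_unique_and_finite}, whose own proof relies on the very statement you are trying to prove. The upper attainment therefore needs a separate argument (e.g.\ an upper semi-continuity plus compactness argument for the map $T\mapsto h_T$, or a diagonal construction along the iterative scheme), and the symmetric transplant you describe is not it.
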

\begin{proof}
This follows from~\cite[Lemma 8, Theorem 12 and Corollary 13]{krak2019hitting}.\qed
\end{proof}

We are now ready to begin the analysis that shows that the reachability condition~\ref{reach_condition} is a sufficient assumption to establish Proposition~\ref{prop:existence_is_unique_and_finite}.
We start with the following result, which says that if $\underline{T}$ satisfies the reachability condition~\ref{reach_condition}, then so do the elements of its dual representation $\mathcal{T}$.
\begin{lemma}\label{lemma:lower_reachability_is_precise_reachability}
Let $\underline{T}:\gambles\to\gambles$ be a map that satisfies~\ref{coh:nonneghomogen}-\ref{coh:bounds} and the reachability condition~\ref{reach_condition}, and let $\mathcal{T}$ be its dual representation. Then for all $T\in\mathcal{T}$ and all $x\in A^c$, there is some $n_x\in\nats$ such that $\bigl[T^{n_x}\mathbb{I}_{A}\bigr](x)>0$; hence $T$ then also satisfies~\ref{reach_condition}.
\end{lemma}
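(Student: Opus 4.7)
The plan is to use the dual representation of $\underline{T}$ to transfer the reachability condition from $\underline{T}$ to each linear $T \in \mathcal{T}$. By Proposition~\ref{prop:dual_set_representation}, for every $f\in\gambles$ and every $T\in\mathcal{T}$ we have $\underline{T}\,f \leq Tf$. My strategy is to bootstrap this one-step inequality to an inequality $\underline{T}^n\mathbb{I}_A \leq T^n\mathbb{I}_A$ that holds pointwise for every $n\in\nats$, and then invoke~\ref{reach_condition}.

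First I would prove the following claim by induction on $n$: for every $T\in\mathcal{T}$ and every $n\in\nats$, $\underline{T}^n\mathbb{I}_A \leq T^n\mathbb{I}_A$. The base case $n=1$ is immediate from the dual representation. For the inductive step, suppose $\underline{T}^{n-1}\mathbb{I}_A \leq T^{n-1}\mathbb{I}_A$. Since $T$ is linear and satisfies~\ref{coh:nonneghomogen}-\ref{coh:bounds}, it is monotone (this is Property~\ref{transprop:monotone} applied to $T$ as a degenerate lower operator, or equivalently follows from the fact that $T$ corresponds to a row-stochastic matrix with non-negative entries). Hence applying $T$ to both sides of the inductive hypothesis yields $T\,\underline{T}^{n-1}\mathbb{I}_A \leq T^n\mathbb{I}_A$. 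On the other hand, the one-step dual bound applied to $f = \underline{T}^{n-1}\mathbb{I}_A$ gives $\underline{T}^n\mathbb{I}_A = \underline{T}\,(\underline{T}^{n-1}\mathbb{I}_A) \leq T\,(\underline{T}^{n-1}\mathbb{I}_A)$, and chaining the two inequalities closes the induction.

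Once the claim is established, the conclusion is immediate: fix any $T\in\mathcal{T}$ and any $x\in A^c$. By~\ref{reach_condition} applied to $\underline{T}$, there exists $n_x\in\nats$ with $\bigl[\underline{T}^{n_x}\mathbb{I}_A\bigr](x)>0$, and then the pointwise inequality just proved yields $\bigl[T^{n_x}\mathbb{I}_A\bigr](x)\geq \bigl[\underline{T}^{n_x}\mathbb{I}_A\bigr](x)>0$, which is~\ref{reach_condition} for $T$.

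The only subtle point is the induction step, where one has to be careful not to conflate $\underline{T}^n$ with any kind of product of operators from $\mathcal{T}$; because $\underline{T}$ is in general non-linear, we cannot write $\underline{T}^n f = \inf_{T\in\mathcal{T}} T^n f$. The correct move is the two-step chain above, combining a single application of the dual inequality $\underline{T}\leq T$ with monotonicity of the fixed linear map $T$. Everything else is a routine unwinding of definitions.
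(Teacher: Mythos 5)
Your proof is correct and takes essentially the same approach as the paper: establishing $\underline{T}^n\mathbb{I}_A \leq T^n\mathbb{I}_A$ by chaining the one-step dual inequality $\underline{T}f \leq Tf$ with a monotonicity property, then invoking~\ref{reach_condition}. The only cosmetic difference is that you structure the chain as an explicit induction using monotonicity of the linear map $T$, whereas the paper writes the telescoping chain directly and invokes monotonicity of $\underline{T}$; both are valid instances of~\ref{transprop:monotone}.
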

\begin{proof}
Fix any $T\in\mathcal{T}$ and any $x\in A^c$. Due to~\ref{reach_condition}, there is some $n_x\in\nats$ such that $[\underline{T}^{n_x}\mathbb{I}_{A}](x)>0$. Because $T\in\mathcal{T}$, it follows from Proposition~\ref{prop:dual_set_representation} that
\begin{equation*}
[T^{n_x}\mathbb{I}_{A}](x) = [TT^{n_x-1}\mathbb{I}_{A}](x) \geq \left[\underline{T}T^{n_x-1}\mathbb{I}_A\right](x) \geq \cdots \geq [\underline{T}^{n_x}\mathbb{I}_{A}](x)>0\,,
\end{equation*}
where we repeatedly used the monotonicity property~\ref{transprop:monotone}.
\qed
\end{proof}
Let us investigate this reachability property for linear maps more fully. We need the following two results.
\begin{lemma}\label{lemma:first_step_reachable}
Let $T:\gambles\to\gambles$ be a linear map that satisfies~\ref{coh:nonneghomogen}-\ref{coh:bounds} and the reachability condition~\ref{reach_condition}. Then there is some $x\in A^c$ such that $[T\mathbb{I}_{A}](x)>0$.
\end{lemma}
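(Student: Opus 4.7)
The plan is to proceed by contraposition: assume that $[T\mathbb{I}_A](x) = 0$ for every $x \in A^c$, and derive a violation of the reachability condition~\ref{reach_condition}. The key observation is that, under this assumption, the zero values on $A^c$ must propagate to every iterate $T^n\mathbb{I}_A$, which directly contradicts~\ref{reach_condition}.

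More concretely, I would prove by induction on $n\in\nats$ that $[T^n\mathbb{I}_A](x)=0$ for all $x\in A^c$. The base case $n=1$ is the contrapositive hypothesis. For the induction step, the crucial observation is that $T^n\mathbb{I}_A \leq \mathbb{I}_A$ pointwise: by the bounds property~\ref{transprop:bounds}, $0\leq T^n\mathbb{I}_A\leq 1$ everywhere, and by the induction hypothesis $T^n\mathbb{I}_A$ vanishes on $A^c$, so it is dominated by $\mathbb{I}_A$ on $A$ (trivially, since $\mathbb{I}_A=1$ there) and on $A^c$ (where both are zero). Applying monotonicity~\ref{transprop:monotone} to $T$ yields $T(T^n\mathbb{I}_A) \leq T\mathbb{I}_A$, so for every $x\in A^c$ we get $[T^{n+1}\mathbb{I}_A](x)\leq [T\mathbb{I}_A](x)=0$; combined with the lower bound from~\ref{transprop:bounds}, this gives $[T^{n+1}\mathbb{I}_A](x)=0$ for all $x\in A^c$, completing the induction.

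Once the induction is established, it directly contradicts~\ref{reach_condition}: for any fixed $x\in A^c$ there should exist $n_x\in\nats$ with $[T^{n_x}\mathbb{I}_A](x)>0$, whereas we have just shown this quantity is zero for every $n\in\nats$. Hence the assumption was false and there must exist some $x\in A^c$ with $[T\mathbb{I}_A](x)>0$.

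I expect no serious obstacle; the only subtlety is the pointwise inequality $T^n\mathbb{I}_A \leq \mathbb{I}_A$, which one could alternatively obtain by noting that $T^n\mathbb{I}_A = \mathbb{I}_A\cdot T^n\mathbb{I}_A$ under the induction hypothesis, together with $T^n\mathbb{I}_A\leq 1$ from~\ref{transprop:bounds}. Note that the proof does not actually use linearity of $T$ beyond what is already implied by~\ref{coh:nonneghomogen}--\ref{coh:bounds}, so the argument could in principle extend to the non-linear coherent setting as well.
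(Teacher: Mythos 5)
Your proof is correct and follows essentially the same route as the paper: an ex-absurdo/contrapositive argument, inductively propagating $[T^n\mathbb{I}_A]\vert_{A^c}=0$ via the pointwise bound $T^n\mathbb{I}_A\leq\mathbb{I}_A$ together with~\ref{transprop:bounds} and~\ref{transprop:monotone}. Your closing observation that linearity is never actually used is accurate — the paper's argument also relies only on the coherence properties — though note that the base case of the contraposition formally gives $[T\mathbb{I}_A](x)\leq 0$ on $A^c$, and the bounds property~\ref{transprop:bounds} is what upgrades this to equality, a step the paper makes explicit.
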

\begin{proof}
Suppose \emph{ex absurdo} that this is false. Since it follows from Property~\ref{transprop:bounds} that $[T\mathbb{I}_{A}](x)\geq \min_{y\in\states}\mathbb{I}_{A}(y)=0$, we must have $[T\mathbb{I}_{A}](x)=0$ for all $x\in A^c$. This provides the induction base for $n=1$ in the following induction argument: suppose that for some $n\in\nats$, $[T^n\mathbb{I}_A](x)=0$ for all $x\in A^c$; we will show that then also $[T^{n+1}\mathbb{I}_A](x)=0$ for all $x\in A^c$.

First, for any $x\in A$ we have $[T^n\mathbb{I}_A](x)\leq \max_{y\in\states}\mathbb{I}_{A}(y)=1=\mathbb{I}_A(x)$ due to~\ref{transprop:bounds}. Moreover, for any $x\in A^c$ we have $[T^n\mathbb{I}_A](x)=0=\mathbb{I}_A(x)$ by the induction hypothesis. Hence we have $T^n\mathbb{I}_A\leq \mathbb{I}_A$. It follows that, for any $x\in A^c$,
\begin{equation*}
[T^{n+1}\mathbb{I}_A](x)=[TT^{n}\mathbb{I}_A](x)\leq [T\mathbb{I}_A](x)=0\,,
\end{equation*}
using the monotonicity of $T$ (Property~\ref{transprop:monotone}) for the inequality, and the argument at the beginning of this proof for the final equality. Because it follows from Property~\ref{transprop:bounds} that $[T^{n+1}\mathbb{I}_A](x)\geq \min_{y\in\states}\mathbb{I}_A(y)=0$, this implies that $[T^{n+1}\mathbb{I}_A](x)=0$ for all $x\in A^c$. This concludes the proof of the induction step.

Hence we have established that, for all $x\in A^c$, $[T^{n}\mathbb{I}_A](x)=0$ for all $n\in\nats$ which, because $A^c$ is non-empty, contradicts the assumption that $T$ satisfies R1. Hence, our assumption must be false, and there must be some $x\in A^c$ such that  $[T\mathbb{I}_{A}](x)>0$.
\qed
\end{proof}

\begin{lemma}\label{lemma:next_step_reachable_means_earlier_step_reachable}
Let $T:\gambles\to\gambles$ be a linear map that satisfies~\ref{coh:nonneghomogen}-\ref{coh:bounds} and the reachability condition~\ref{reach_condition}. Fix any $x\in A^c$ and $n\in\nats$ with $n>1$, and suppose that  $\bigl[T^n\mathbb{I}_{A}\bigr](x)>0$ and $\bigl[T^m\mathbb{I}_{A}\bigr](x)=0$ for all $m\in\nats$ with $m<n$. Then there is some $y\in A^c$ such that $[T\mathbb{I}_{\{y\}}](x)>0$ and $[T^{n-1}\mathbb{I}_A](y)>0$.
\end{lemma}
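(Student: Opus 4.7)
The plan is to exploit that $T$ is linear, which, as noted in the paragraph following Proposition~\ref{prop:dual_set_representation}, means it can be identified with a row-stochastic $\abs{\states}\times\abs{\states}$ matrix with non-negative entries $T(x,y)$, acting as $[Tf](x) = \sum_{y\in\states} T(x,y)\,f(y)$ for all $f\in\gambles$. In particular $[T\mathbb{I}_{\{y\}}](x) = T(x,y)$, so the first conclusion of the lemma amounts to finding some $y\in A^c$ with $T(x,y)>0$ and $[T^{n-1}\mathbb{I}_A](y)>0$.

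The key computation is to write $T^n\mathbb{I}_A = T\bigl(T^{n-1}\mathbb{I}_A\bigr)$ and use linearity to expand
\begin{equation*}
\bigl[T^n\mathbb{I}_A\bigr](x) \;=\; \sum_{y\in\states} T(x,y)\,\bigl[T^{n-1}\mathbb{I}_A\bigr](y).
\end{equation*}
By Property~\ref{transprop:bounds}, each factor $[T^{n-1}\mathbb{I}_A](y)$ is bounded below by $\min_{z\in\states}\mathbb{I}_A(z)=0$, and the coefficients $T(x,y)$ are non-negative as well. Since the hypothesis gives that the total sum is strictly positive, at least one summand must be strictly positive, yielding some $y\in\states$ with both $T(x,y)>0$ and $[T^{n-1}\mathbb{I}_A](y)>0$.

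The only remaining step — and the only place the minimality hypothesis enters — is to show that such a $y$ must lie in $A^c$ rather than in $A$. Here I would invoke the assumption $[T^m\mathbb{I}_A](x)=0$ at $m=1$, which is available because $n>1$. Expanding
\begin{equation*}
0 \;=\; \bigl[T\mathbb{I}_A\bigr](x) \;=\; \sum_{z\in A} T(x,z),
\end{equation*}
and using non-negativity of the summands forces $T(x,z)=0$ for every $z\in A$. Consequently, $y\in A$ would contradict $T(x,y)>0$, so $y\in A^c$, and together with the preceding paragraph this proves both $[T\mathbb{I}_{\{y\}}](x) = T(x,y) > 0$ and $[T^{n-1}\mathbb{I}_A](y)>0$. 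There is no substantive obstacle here; the argument is a direct unpacking of the linear matrix representation of $T$ combined with the $m=1$ case of the minimality assumption.
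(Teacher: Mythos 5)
Your proof is correct and follows essentially the same strategy as the paper's: use the linearity of $T$ to expand $[T^n\mathbb{I}_A](x)$ as a non-negative sum, deduce that some summand indexed by $y$ is positive, and then invoke the $m=1$ hypothesis $[T\mathbb{I}_A](x)=0$ to rule out $y\in A$. The only difference is in how you organise the decomposition: you expand $T^n\mathbb{I}_A = T\bigl(T^{n-1}\mathbb{I}_A\bigr)$ directly as $\sum_{y} T(x,y)\,[T^{n-1}\mathbb{I}_A](y)$, which gives the two desired positivity claims about $y$ in a single step. The paper instead first splits $\mathbb{I}_A=\sum_{z\in A}\mathbb{I}_{\{z\}}$ to locate a $z\in A$ with $[T^n\mathbb{I}_{\{z\}}](x)>0$, then decomposes $T^{n-1}\mathbb{I}_{\{z\}}$ over $y$, and finally uses monotonicity to pass from $[T^{n-1}\mathbb{I}_{\{z\}}](y)>0$ to $[T^{n-1}\mathbb{I}_A](y)>0$. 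Your route bypasses that intermediate $z$ and the monotonicity step, so it is a touch shorter, but the underlying idea is the same and both are correct; the appeal to the row-stochastic matrix representation of linear coherent maps is legitimate and already established in the text.
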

\begin{proof}
Because $\mathbb{I}_A=\sum_{z\in A}\mathbb{I}_{\{z\}}$, and using the linear character of $T$---and therefore of $T^n$---we have
\begin{equation*}
0< \bigl[T^n\mathbb{I}_{A}\bigr](x) = \sum_{z\in A}[T^n\mathbb{I}_{\{z\}}](x)\,,
\end{equation*}
and hence there must be some $z\in A$ such that $[T^n\mathbb{I}_{\{z\}}](x)>0$. Next, we note that for any $f\in\gambles$ it holds that $f=\sum_{y\in\states}f(y)\mathbb{I}_{\{y\}}$. Hence, expanding the product $T^n$ and using the linearity of $T$---and therefore of $T^{n-1}$---yields
\begin{equation*}
[T^n\mathbb{I}_{\{z\}}](x)=\bigl[T\bigl(T^{n-1}\mathbb{I}_{\{z\}}\bigr)\bigr](x)=\sum_{y\in\states}\Bigl[T\Bigl([T^{n-1}\mathbb{I}_{\{z\}}](y)\mathbb{I}_{\{y\}}\Bigr)\Bigr](x)\,.
\end{equation*}
Because $[T^n\mathbb{I}_{\{z\}}](x)>0$, there must be some $y\in\states$ such that 
\begin{equation*}
0<\Bigl[T\Bigl([T^{n-1}\mathbb{I}_{\{z\}}](y)\mathbb{I}_{\{y\}}\Bigr)\Bigr](x) = [T^{n-1}\mathbb{I}_{\{z\}}](y)\bigl[T\mathbb{I}_{\{y\}}\bigr](x)\,,
\end{equation*} 
using the linearity of $T$ for the equality. Since both factors in this expression are clearly non-negative due to Property~\ref{transprop:bounds}, this implies that $[T\mathbb{I}_{\{y\}}](x)>0$ and $[T^{n-1}\mathbb{I}_{\{z\}}](y)>0$. Because $z\in A$ we have $\mathbb{I}_{\{z\}}\leq\mathbb{I}_A$, and hence this last inequality together with Property~\ref{transprop:monotone} implies that
\begin{equation*}
0 < [T^{n-1}\mathbb{I}_{\{z\}}](y) \leq [T^{n-1}\mathbb{I}_A](y)\,,
\end{equation*}
so we see that it only remains to show that $y\in A^c$. Suppose \emph{ex absurdo} that $y\in A$. Then $\mathbb{I}_{\{y\}}\leq\mathbb{I}_A$, and because $[T\mathbb{I}_{\{y\}}](x)>0$, it follows from Property~\ref{transprop:monotone} that
\begin{equation*}
0<[T\mathbb{I}_{\{y\}}](x)\leq [T\mathbb{I}_A](x)\,,
\end{equation*}
which contradicts the assumption that $[T^m\mathbb{I}_A](x)=0$ for all $m<n$. Hence $y\in A^c$, which concludes the proof.
\qed
\end{proof}

In the sequel, we will be interested in some results about mappings on subspaces of $\gambles$. The following definition introduces the required notation.
\begin{definition}\label{def:restriction}
For any $f\in\gambles$, we denote by $f\vert_{A^c}$ the restriction of $f$ to $A^c$, i.e. the mapping $f\vert_{A^c}:A^c\to\reals:x\mapsto f(x)$, which is an element of $\mathbb{R}^{A^c}$.

Moreover, let $M:\gambles\to\gambles$ be a map. Then we define its restriction $M\vert_{A^c}$ to the subspace $\mathbb{R}^{A^c}$ of $\gambles$, for all $f\in\mathbb{R}^{A^c}$, as
\begin{equation*}
M\vert_{A^c}(f) \coloneqq \bigr(M(g\cdot\mathbb{I}_{A^c})\bigl)\vert_{A^c}\quad(g\in\gambles:g\vert_{A^c}=f)\,.
\end{equation*}
\end{definition}
The space $\mathbb{R}^{A^c}$ again receives the supremum norm, and maps from $\mathbb{R}^{A^c}$ to $\mathbb{R}^{A^c}$ the induced operator norm. Note that if the map $M$ in the previous definition is a linear map, then also its restriction $M\vert_{A^c}$ is a linear map. Hence in particular, in that case $M$ can be interpreted as a matrix, and the restriction $M\vert_{A^c}$ can then be interpreted as the $\lvert A^c\rvert\times\lvert A^c\rvert$ submatrix of $M$ on the coordinates $A^c$. Moreover, we note the following simple property:
\begin{lemma}\label{lemma:restriction_bounded_is_bounded}
Let $M:\gambles\to\gambles$ be a bounded map. Then its restriction $M\vert_{A^c}$ to $\mathbb{R}^{A^c}$ is also bounded.
\end{lemma}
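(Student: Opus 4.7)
The plan is essentially a definition chase: unpack what the restriction does, and observe that extending by zero preserves norms while restricting can only decrease them.

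First I would observe that the restriction is well defined in the sense that the choice of extension $g$ is immaterial, because for any $g\in\gambles$ with $g\vert_{A^c}=f$, the product $g\cdot\mathbb{I}_{A^c}$ equals the function $f^e\in\gambles$ defined by $f^e(x)=f(x)$ for $x\in A^c$ and $f^e(x)=0$ for $x\in A$. So $M\vert_{A^c}(f) = (Mf^e)\vert_{A^c}$, and crucially $\norm{f^e}_\infty = \norm{f}_\infty$ because the added coordinates have value zero which cannot exceed $\max_{x\in A^c}\abs{f(x)}$ in absolute value (unless $f$ is identically zero on $A^c$, in which case $f^e=0$ and the equality is trivial).

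Next I would take an arbitrary norm-bounded set $B\subseteq\mathbb{R}^{A^c}$, say with $\norm{f}_\infty\leq K$ for all $f\in B$ and some $K\in\realsnonneg$, and consider the set $B^e\coloneqq\{f^e\,:\,f\in B\}\subseteq\gambles$. By the observation above, $B^e$ is a norm-bounded subset of $\gambles$ (by the same constant $K$). Because $M$ is bounded by hypothesis, $M(B^e)=\{Mf^e\,:\,f\in B\}$ is a norm-bounded subset of $\gambles$; let $K'\in\realsnonneg$ be such that $\norm{Mf^e}_\infty\leq K'$ for all $f\in B$.

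Finally I would note that the supremum-norm of a restriction is never larger than the supremum-norm of the function itself, so $\norm{M\vert_{A^c}(f)}_\infty = \norm{(Mf^e)\vert_{A^c}}_\infty \leq \norm{Mf^e}_\infty \leq K'$ for all $f\in B$. Hence $M\vert_{A^c}(B)$ is norm-bounded, and since $B$ was an arbitrary norm-bounded subset of $\mathbb{R}^{A^c}$, this shows that $M\vert_{A^c}$ is bounded. There is no real obstacle here; the only thing to be careful about is checking that extension by zero really does preserve the supremum norm and that restriction cannot increase it, which are both immediate from the definition of $\norm{\cdot}_\infty$.
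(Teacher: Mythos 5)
Your argument is correct, and it uses the same essential observations as the paper: extension by zero is norm-preserving, restriction is norm-non-increasing, and boundedness of $M$ is sandwiched between the two. The one difference worth noting is that the paper's proof is phrased in terms of the operator norm --- it shows $\norm{M\vert_{A^c}f}\leq\norm{M}$ for all $f$ with $\norm{f}\leq 1$, hence $\norm{M\vert_{A^c}}\leq\norm{M}$ --- whereas you work directly with the definition of ``bounded'' as mapping norm-bounded sets to norm-bounded sets. Your phrasing is marginally more careful for a general (not necessarily non-negatively homogeneous) map $M$, since the paper's own remarks establish the equivalence between finite operator norm and boundedness only under~\ref{coh:nonneghomogen}; for the linear maps to which the lemma is actually applied, this distinction evaporates and the two arguments are equivalent.
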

\begin{proof}
Choose any $f\in\reals^{A^c}$ with $\norm{f}\leq 1$, and let $g\in\gambles$ be such that $g\vert_{A^c}=f$ and $g(x)=0$ for all $x\in A$. Then it holds that $g\cdot\mathbb{I}_{A^c}=g$ and $\norm{g}=\norm{f}$. Hence it follows that
\begin{equation*}
\norm{M\vert_{A^c}f}=\norm{\bigl(M(g\cdot\mathbb{I}_{A^c})\bigr)\big\vert_{A^c}} = \norm{(Mg)\vert_{A^c}} \leq \norm{Mg}\leq \norm{M}\,,
\end{equation*}
where the last inequality used that $\norm{g}=\norm{f}\leq 1$.
\qed
\end{proof}

We next need some elementary results from the spectral theory of bounded linear maps, in particular for linear maps from $\reals^{A^c}$ to $\reals^{A^c}$. The following definition aggregates concepts from~\cite[Chapter 7, Definitions 1.2, 3.1, 3.5]{dunford1988linear}.
\begin{definition}[\cite{dunford1988linear}]
Let $M:\reals^{A^c}\to\reals^{A^c}$ be a bounded linear map. The \emph{spectrum} of $M$ is the set $\sigma(M)\coloneqq \{\lambda\in\mathbb{C}\,:\,(\lambda I-M)\text{ is not one-to-one}\}$, where $I$ denotes the identity map on $\reals^{A^c}$. The \emph{spectral radius of $M$} is $\rho(M)\coloneqq \sup_{\lambda\in\sigma(M)}\abs{\lambda}$. The \emph{resolvent $R(\lambda, M)$ of $M$} is defined for all $\lambda\in\mathbb{C}\setminus \sigma(M)$ as $R(\lambda,M)\coloneqq (\lambda I-M)^{-1}$.
\end{definition}
Because $A^c$ is finite (since $\states\supset A^c$ is finite), the spectrum $\sigma(M)$ of any bounded linear map from $\reals^{A^c}$ to $\reals^{A^c}$ is a non-empty and finite set~\cite[Chapter 7, Corollary 1.4]{dunford1988linear}. 
We will need the following properties of these objects.
\begin{lemma}[\headercite{dunford1988linear}{Chapter 7, Lemma 3.4}]\label{lemma:resolvent_is_series}
Let $M:\reals^{A^c}\to\reals^{A^c}$ be a bounded linear map. Then for all $\lambda\in\mathbb{C}$ with $\abs{\lambda}>\rho(M)$, the series $\sum_{k=0}^{+\infty}\frac{1}{\lambda^{k+1}}M^k$ converges in norm to $R(\lambda, M)$.
\end{lemma}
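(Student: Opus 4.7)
The plan is to establish the classical Neumann-series formula by combining two ingredients: Gelfand's spectral-radius formula to guarantee absolute convergence, and a telescoping identity on partial sums to identify the limit as the inverse of $\lambda I - M$. Throughout, I would exploit that $\reals^{A^c}$ is finite-dimensional, so submultiplicativity $\|M^{k+j}\|\leq \|M^k\|\,\|M^j\|$ holds for the operator norm and Gelfand's formula $\rho(M)=\lim_{n\to+\infty}\|M^n\|^{1/n}$ applies.

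First I would show convergence in the operator norm. Fix $\lambda\in\mathbb{C}$ with $|\lambda|>\rho(M)$ and pick any $r$ strictly between $\rho(M)$ and $|\lambda|$. By Gelfand's formula, $\|M^k\|^{1/k}\to\rho(M)$, so there exists $k_0\in\nats$ with $\|M^k\|\leq r^k$ for all $k\geq k_0$. Then
\begin{equation*}
\sum_{k=k_0}^{+\infty}\left\|\tfrac{1}{\lambda^{k+1}}M^k\right\|\leq \frac{1}{|\lambda|}\sum_{k=k_0}^{+\infty}\bigl(r/|\lambda|\bigr)^k<+\infty\,,
\end{equation*}
since $r/|\lambda|<1$. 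Absolute convergence in the Banach space of bounded linear operators on $\reals^{A^c}$ gives convergence in norm; call the limit $S_\lambda$.

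Next I would identify $S_\lambda$ as the resolvent by a telescoping argument on the partial sums $S_N\coloneqq\sum_{k=0}^{N}\lambda^{-(k+1)}M^k$. Direct computation yields
\begin{equation*}
(\lambda I-M)S_N = \sum_{k=0}^{N}\lambda^{-k}M^k-\sum_{k=0}^{N}\lambda^{-(k+1)}M^{k+1} = I-\lambda^{-(N+1)}M^{N+1}\,,
\end{equation*}
and analogously $S_N(\lambda I-M)=I-\lambda^{-(N+1)}M^{N+1}$. By the norm bound established above, $\|\lambda^{-(N+1)}M^{N+1}\|\to 0$ as $N\to+\infty$. Passing to the limit and using that composition of bounded linear maps is continuous in the operator norm, we obtain $(\lambda I-M)S_\lambda=S_\lambda(\lambda I-M)=I$. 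Hence $\lambda I-M$ is invertible, so $\lambda\notin\sigma(M)$, and $S_\lambda=(\lambda I-M)^{-1}=R(\lambda,M)$, as claimed.

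The only nontrivial ingredient is Gelfand's formula; everything else is a standard geometric-series manipulation. In a self-contained argument one would have to either invoke Gelfand's formula as a black box or derive it from scratch (via the Cauchy--Hadamard formula applied to the operator-valued power series whose radius of convergence equals $\rho(M)^{-1}$), which is the main technical hurdle. Since the statement is cited directly from Dunford and Schwartz, I would simply invoke Gelfand's formula from the same reference and keep the proof to the two steps above.
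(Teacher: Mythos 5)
The paper does not prove this lemma at all; it cites it verbatim from Dunford and Schwartz (Chapter 7, Lemma 3.4) and uses it as a black box. So there is no "paper's proof" to compare against. Your argument is the standard Neumann-series derivation and it is correct: Gelfand's formula $\rho(M)=\lim_{n\to+\infty}\norm{M^n}^{1/n}$ (valid in any Banach algebra, and in particular for bounded operators on the finite-dimensional $\reals^{A^c}$) gives the geometric tail bound $\norm{M^k}\leq r^k$ eventually for any $r$ with $\rho(M)<r<\abs{\lambda}$, so the series converges absolutely, hence in norm since the space of bounded operators is complete; the telescoping identity $(\lambda I-M)S_N=S_N(\lambda I-M)=I-\lambda^{-(N+1)}M^{N+1}$ then identifies the limit as a two-sided inverse of $\lambda I-M$, which simultaneously shows $\lambda\notin\sigma(M)$ and that the sum equals $R(\lambda,M)$. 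The only dependency you import is Gelfand's formula, and invoking it from the same reference is the right call; deriving it from scratch would be disproportionate for a cited auxiliary lemma. In short: you supplied a proof where the paper deliberately supplies only a citation, and the proof you supplied is sound.
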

The next property is well-known, but we had some trouble finding an easy-to-use reference; hence we prove it explicitly below.
\begin{corollary}\label{cor:powers_vanish}
Let $M:\reals^{A^c}\to\reals^{A^c}$ be a bounded linear map, and suppose that $\rho(M)<1$. Then $\lim_{n\to+\infty}M^n=0$.
\end{corollary}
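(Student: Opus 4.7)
My plan is to combine Lemma~\ref{lemma:resolvent_is_series} with the elementary observation that the general term of any norm-convergent series must tend to zero in norm. Specifically, since $\rho(M) < 1$, I can select a real scalar $\lambda$ with $\rho(M) < \lambda < 1$. Then $\abs{\lambda} = \lambda > \rho(M)$, so Lemma~\ref{lemma:resolvent_is_series} applies and tells us that the series $\sum_{k=0}^{+\infty} \lambda^{-(k+1)} M^k$ converges in operator norm (to $R(\lambda, M)$).

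From this convergence, I conclude that the terms $\lambda^{-(k+1)} M^k$ must themselves go to zero in operator norm as $k \to +\infty$; equivalently, $\lambda^{-(k+1)} \norm{M^k} \to 0$. The final step is the inequality $\norm{M^k} \leq \lambda^{-(k+1)} \norm{M^k}$, which is valid because $0 < \lambda < 1$ implies $\lambda^{-(k+1)} \geq 1$ for every $k \in \natswith$. Combining these yields $\norm{M^n} \to 0$, i.e. $\lim_{n\to+\infty} M^n = 0$ in operator norm, as claimed.

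There is no real obstacle here: the argument is essentially the standard ``Neumann-series / spectral radius'' proof packaged around Lemma~\ref{lemma:resolvent_is_series}. The only mildly delicate points are (i) making sure I pick $\lambda$ strictly between $\rho(M)$ and $1$, which is possible precisely because $\rho(M) < 1$, and (ii) noting that in the finite-dimensional space $\mathbb{R}^{A^c}$, ``convergence in norm'' needs no further qualification, so the vanishing of the terms in operator norm directly yields the claimed limit.
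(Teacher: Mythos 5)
Your proof is correct and follows essentially the same route as the paper: both invoke Lemma~\ref{lemma:resolvent_is_series} and deduce the claim from the elementary fact that the general term of a norm-convergent series must vanish. The paper applies the lemma directly at $\lambda=1$ and makes the vanishing explicit via a telescoping triangle inequality, whereas you introduce an auxiliary $\lambda\in(\rho(M),1)$ and then use the comparison $\norm{M^k}\leq\lambda^{-(k+1)}\norm{M^k}$; this extra step is harmless but unnecessary, since taking $\lambda=1$ yields $\norm{M^k}\to 0$ directly.
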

\begin{proof}
Because $\rho(M)<1$, we have $\lim_{n\to+\infty}\norm{\sum_{k=0}^n M^k - R(1,M)} = 0$ by  Lemma~\ref{lemma:resolvent_is_series}.
Now for any $n\in\nats$, it holds that
\begin{align*}
\norm{M^n}  = \norm{\sum_{k=0}^{n} M^k - \sum_{k=0}^{n-1} M^k} &= \norm{\sum_{k=0}^{n} M^k - R(1,M) + R(1,M) - \sum_{k=0}^{n-1} M^k} \\
 &\leq \norm{\sum_{k=0}^{n} M^k - R(1,M)} + \norm{R(1,M) - \sum_{k=0}^{n-1} M^k}\,.
\end{align*}
Because both summands on the right-hand side vanish as we take $n$ to $+\infty$, it follows that $\lim_{n\to+\infty}\norm{M^n}=0$, or in other words, that $\lim_{n\to+\infty}M^n=0$.
\qed
\end{proof}

\begin{lemma}[\headercite{dunford1988linear}{Chapter 7, Lemma 3.4}]\label{lemma:bound_spectral_radius}
Let $M:\reals^{A^c}\to\reals^{A^c}$ be a bounded linear map. Then $\rho(M)\leq \norm{M^n}^{\frac{1}{n}}$ for all $n\in\nats$.
\end{lemma}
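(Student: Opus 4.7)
The plan is to prove the contrapositive: for any $\lambda\in\mathbb{C}$ with $\abs{\lambda}^n > \norm{M^n}$, I will show that $\lambda\notin\sigma(M)$. Taking the supremum of $\abs{\lambda}$ over $\sigma(M)$ (which is non-empty and finite since $A^c$ is finite) then yields $\rho(M)^n \leq \norm{M^n}$, which is exactly the claim.

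The backbone of the argument is the polynomial factorization
\begin{equation*}
\lambda^n I - M^n \;=\; (\lambda I - M)\,Q_{\lambda,n} \;=\; Q_{\lambda,n}\,(\lambda I - M)\,, \qquad Q_{\lambda,n} \coloneqq \sum_{k=0}^{n-1}\lambda^{n-1-k}M^k\,,
\end{equation*}
where the two factors commute because $Q_{\lambda,n}$ is a polynomial in $M$. Hence the key step reduces to showing that $\lambda^n I - M^n$ is invertible; for then, denoting its two-sided inverse by $S$, we obtain $(\lambda I - M)(Q_{\lambda,n}S) = I$ and $(SQ_{\lambda,n})(\lambda I - M) = I$, so that $\lambda I - M$ is invertible and in particular one-to-one, which is precisely the negation of $\lambda\in\sigma(M)$ in the paper's definition.

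To establish the invertibility of $\lambda^n I - M^n$ under the hypothesis $\norm{M^n} < \abs{\lambda}^n$, I would give a direct Neumann series argument that does not appeal to the lemma we are proving: set $S\coloneqq \lambda^{-n}\sum_{j=0}^{\infty}(M^n/\lambda^n)^{j}$, whose partial sums form a norm-Cauchy sequence because $\norm{M^n/\lambda^n}<1$ provides a convergent geometric majorant; a routine telescoping computation then shows $(\lambda^n I - M^n)S = S(\lambda^n I - M^n) = I$. The main obstacle---really the only subtle point---is that $\sigma(M)$ is defined in terms of \emph{complex} $\lambda$ while $M$ acts on the real space $\reals^{A^c}$. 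This is handled in the standard way by tacitly identifying $M$ with its complexification on $\mathbb{C}^{A^c}$, which does not alter the operator norm on this finite-dimensional space, so that both the Neumann series estimate and the factorization argument transfer verbatim to the complexified setting.
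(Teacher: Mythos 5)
The paper does not prove this lemma at all: it is stated as a citation to Dunford and Schwartz (Chapter~7, Lemma~3.4), so there is no internal proof to compare against. Your self-contained argument is correct. The contrapositive set-up, the polynomial factorization $\lambda^n I - M^n = (\lambda I - M)Q_{\lambda,n} = Q_{\lambda,n}(\lambda I - M)$ with $Q_{\lambda,n}$ a polynomial in $M$, and the Neumann-series inversion of $\lambda^n I - M^n$ under $\norm{M^n/\lambda^n}<1$ together give a clean, standard proof; obtaining a one-sided inverse from each factorization and noting that a left and a right inverse coincide yields invertibility, hence $\lambda\notin\sigma(M)$, and taking the supremum over $\sigma(M)$ finishes. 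You were also right to flag the real-versus-complex issue: the paper's definition of $\sigma(M)$ ranges over $\lambda\in\mathbb{C}$ while $M$ acts on $\reals^{A^c}$, so one must tacitly work with the complexification on $\mathbb{C}^{A^c}$; for the supremum norm on a finite-dimensional coordinate space this does not change the operator norm of $M^n$, so your estimate transfers as claimed. One small remark: the non-emptiness of $\sigma(M)$ that you invoke is not actually needed for the inequality (an empty supremum would make $\rho(M)=-\infty$ and the bound trivial), though it does no harm to mention it. Compared with the textbook treatment being cited, which typically derives this as a byproduct of the Laurent expansion of the resolvent, your route is more elementary and entirely adequate here; what you lose is that it does not by itself give the Gelfand limit formula $\rho(M)=\lim_n\norm{M^n}^{1/n}$, but the paper only uses the one-sided bound.
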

The crucial observation is now that, for a linear map $T$ that satisfies~\ref{coh:nonneghomogen}-\ref{coh:bounds} and the reachability condition~\ref{reach_condition}, its restriction $T\vert_{A^c}$ to $\reals^{A^c}$ is a bounded linear map (due to Corollary~\ref{cor:lower_trans_is_bounded} and Lemma~\ref{lemma:restriction_bounded_is_bounded}) that for large enough $n\in\nats$, as we will see in Corollary~\ref{cor:norm_restriction_eventually_small_enough}, satisfies $\norm{(T\vert_{A^c})^n}<1$. We can therefore use Lemma~\ref{lemma:bound_spectral_radius} to establish that the spectral radius of $T\vert_{A^c}$ is less than 1, and we can then apply Lemma~\ref{lemma:resolvent_is_series} and Corollary~\ref{cor:powers_vanish}.

Let us establish that these claims are indeed true. We start with the following result, which gives some basic properties of these restrictions.
\begin{lemma}\label{lemma:basic_properties_restriction}
Let $T:\gambles\to\gambles$ be a linear map that satisfies~\ref{coh:nonneghomogen}-\ref{coh:bounds}, and let $T\vert_{A^c}$ be its restriction to $\reals^{A^c}$, as in Definition~\ref{def:restriction}. Then $(T\vert_{A^c})^nf \leq (T\vert_{A^c})^ng$ for all $n\in\nats$ and all $f,g\in\reals^{A^c}$ with $f\leq g$. Moreover, it holds that $0\leq (T\vert_{A^c})^n1\leq 1$ for all $n\in\nats$.
\end{lemma}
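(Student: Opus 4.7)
The plan is to prove both claims by induction on $n$, reducing everything to properties of $T$ on $\gambles$ via Definition~\ref{def:restriction}. The observation that makes this work is that the definition multiplies any chosen extension $g \in \gambles$ by $\mathbb{I}_{A^c}$, so the value of $M\vert_{A^c}(f)$ depends only on $f$, and we may always work with the zero-extension $\tilde{f} \in \gambles$ defined by $\tilde{f}\vert_{A^c} = f$ and $\tilde{f}(x) = 0$ for $x \in A$. This extension satisfies $\tilde{f}\cdot \mathbb{I}_{A^c} = \tilde{f}$.

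For the monotonicity claim, I would first handle $n = 1$. Given $f \leq g$ in $\reals^{A^c}$, the zero-extensions satisfy $\tilde{f} \leq \tilde{g}$ pointwise on $\states$ (they agree on $A$ and $f \leq g$ on $A^c$). Since $T$ is a linear map satisfying~\ref{coh:nonneghomogen}-\ref{coh:bounds}, Property~\ref{transprop:monotone} gives $T\tilde{f} \leq T\tilde{g}$, and restricting to $A^c$ yields $T\vert_{A^c} f \leq T\vert_{A^c} g$ by Definition~\ref{def:restriction}. The induction step is immediate: if $(T\vert_{A^c})^n f \leq (T\vert_{A^c})^n g$ holds for some $n \in \nats$ whenever $f \leq g$, then applying the $n = 1$ case to these two elements of $\reals^{A^c}$ gives $(T\vert_{A^c})^{n+1} f \leq (T\vert_{A^c})^{n+1} g$.

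For the bounds, I would first compute the base cases $(T\vert_{A^c})\,0 = \bigl(T(0\cdot \mathbb{I}_{A^c})\bigr)\vert_{A^c} = 0$ by linearity of $T$, and $(T\vert_{A^c})\,1 = (T\,\mathbb{I}_{A^c})\vert_{A^c}$ since the zero-extension of the constant function $1 \in \reals^{A^c}$ is precisely $\mathbb{I}_{A^c} \in \gambles$. Property~\ref{transprop:bounds} applied to $\mathbb{I}_{A^c}$ gives $0 = \min_{x\in\states}\mathbb{I}_{A^c}(x) \leq T\,\mathbb{I}_{A^c} \leq \max_{x\in\states}\mathbb{I}_{A^c}(x) = 1$, so $0 \leq (T\vert_{A^c})\,1 \leq 1$. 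For the induction step, write $f_n \coloneqq (T\vert_{A^c})^n\, 1$; assuming $0 \leq f_n \leq 1$, the monotonicity of $T\vert_{A^c}$ just established, combined with the base case, yields
\begin{equation*}
0 \;=\; (T\vert_{A^c})\,0 \;\leq\; (T\vert_{A^c})\, f_n \;=\; f_{n+1} \;\leq\; (T\vert_{A^c})\,1 \;\leq\; 1\,.
\end{equation*}
There is no real obstacle here; the only care required is in the bookkeeping of the zero-extension from $\reals^{A^c}$ to $\gambles$ needed to invoke Definition~\ref{def:restriction} and translate T1-T2 into their restricted analogues.
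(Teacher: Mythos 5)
Your proof is correct and follows essentially the same approach as the paper's: both proofs reduce to the zero-extension of $f$ and $g$ to $\gambles$, invoke Properties~\ref{transprop:bounds} and~\ref{transprop:monotone} of $T$, and close the argument by induction on $n$. The only minor cosmetic differences are that you apply Property~\ref{transprop:bounds} directly to $\mathbb{I}_{A^c}$ to get the base case $0\leq T\vert_{A^c}1\leq 1$ (the paper instead sandwiches $T\mathbb{I}_{A^c}$ between $T0$ and $T1$), and your induction step for the bounds claim is a single-step argument rather than the paper's telescoping chain of inequalities; both variants are equally valid.
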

\begin{proof}
For the first claim, we proceed by induction on $n$. For the induction base, let $f',g'\in\gambles$ be such that $f'(x)\coloneqq 0$ and $g'(x)\coloneqq 0$ for all $x\in A$, and $f'(x)\coloneqq f(x)$ and $g'(x)\coloneqq g(x)$ for all $x\in A^c$. 
Then, clearly, $f'\vert_{A^c}=f$ and $g'\vert_{A^c}=g$, and $f'\leq g'$ because $f\leq g$. Moreover, it holds that $f'\cdot\mathbb{I}_{A^c}=f'$ and $g'\cdot\mathbb{I}_{A^c}=g'$.
It follows from Definition~\ref{def:restriction} that, for any $x\in A^c$, since $f'\leq g'$, it holds that
\begin{align*}
[T\vert_{A^c}f](x) = [T(f'\cdot\mathbb{I}_{A^c})](x) &= [T(f')](x) \\
&\leq [T(g')](x) = [T(g'\cdot\mathbb{I}_{A^c})](x) = [T\vert_{A^c}g](x)\,,
\end{align*}
where we used that $T$ satisfies Property~\ref{transprop:monotone} for the inequality. Because this is true for any $x\in\states$, it follows that $T\vert_{A^c}f\leq T\vert_{A^c}g$.

Now suppose that $(T\vert_{A^c})^nf\leq (T\vert_{A^c})^ng$ for some $n\in\nats$. Then also
\begin{equation*}
(T\vert_{A^c})^{n+1}f = T\vert_{A^c}\bigl((T\vert_{A^c})^nf\bigr) \leq T\vert_{A^c}\bigl((T\vert_{A^c})^ng\bigr) = (T\vert_{A^c})^{n+1}g\,,
\end{equation*}
using the argument for the induction base for the inequality.

For the second claim, start by noting that $\mathbb{I}_{A^c}\vert_{A^c}=1$, $\mathbb{I}_{A^c}\cdot\mathbb{I}_{A^c}=\mathbb{I}_{A^c}$, and $0\leq \mathbb{I}_{A^c}\leq 1$. Hence it follows from Definition~\ref{def:restriction} that, for all $x\in A^c$,
\begin{equation*}
0=[T0](x)\leq [T\mathbb{I}_{A^c}](x) = [T\vert_{A^c}1](x) = [T\mathbb{I}_{A^c}](x) \leq [T1](x)=1\,,
\end{equation*}
where we used the linearity of $T$ for the first equality, Property~\ref{transprop:monotone} for the first inequality, Definition~\ref{def:restriction} for the second and third equalities, Property~\ref{transprop:monotone} for the second inequality, and Property~\ref{transprop:bounds} for the last equality.

Hence we have $0\leq T\vert_{A^c}1\leq 1$. By repeatedly using the monotonicity property established in the first part of this proof, it follows that also for all $n>1$ we have that
\begin{align*}
(T\vert_{A^c})^n1 = (T\vert_{A^c})^{n-1}\bigl(T\vert_{A^c}1\bigr) &\leq (T\vert_{A^c})^{n-1}1 \\
 &= (T\vert_{A^c})^{n-2}\bigl(T\vert_{A^c}1\bigr) \leq\cdots \leq T\vert_{A^c}1\leq 1\,.
\end{align*}
To establish that $0\leq (T\vert_{A^c})^n1$ for all $n\in\nats$, we note that the case for $n=1$ was already given above. Now, for any $n>1$ we have
\begin{equation*}
(T\vert_{A^c})^n1=(T\vert_{A^c})^{n-1}(T\vert_{A_c}1)\geq (T\vert_{A^c})^{n-1}0=0\,,
\end{equation*}
where for the inequality we used the property $0\leq T\vert_{A^c}1$ together with the monotonicity property established in the first part of this proof; and where we used the linearity of $(T\vert_{A^c})^{n-1}$ for the final equality.
\qed
\end{proof}

\begin{lemma}\label{lemma:complement_eventually_moves}
Let $T:\gambles\to\gambles$ be a linear map that satisfies~\ref{coh:nonneghomogen}-\ref{coh:bounds} and the reachability condition~\ref{reach_condition}. Then there is some $n\in\nats$ such that for all $k\in\nats$ with $k\geq n$, it holds that $[(T\vert_{A^c})^k1](x)<1$ for all $x\in A^c$.
\end{lemma}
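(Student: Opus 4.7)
The plan is to bound $(T\vert_{A^c})^k 1$ pointwise above by $T^k\mathbb{I}_{A^c} = 1-T^k\mathbb{I}_A$, and then invoke the reachability condition R1 to conclude that this upper bound is strictly less than $1$ for suitably large $k$. A uniform $n$ will then follow from the monotonicity of the sequence in $k$ together with the finiteness of $A^c$.

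First, I would unwind Definition~\ref{def:restriction} to obtain an explicit recursion. For each $k\in\natswith$, let $\hat{u}_k\in\gambles$ denote the extension of $(T\vert_{A^c})^k 1$ by zero on $A$, so that $\hat{u}_0=\mathbb{I}_{A^c}$ and $\hat{u}_k\cdot\mathbb{I}_{A^c}=\hat{u}_k$ for every $k$. Taking $g\coloneqq \hat{u}_k$ in Definition~\ref{def:restriction}, one obtains the recursion $\hat{u}_{k+1}=\mathbb{I}_{A^c}\cdot T\hat{u}_k$. Using $\mathbb{I}_{A^c}\leq 1$, the nonnegativity guaranteed by Property~\ref{transprop:bounds} (since $\hat{u}_k\geq 0$ implies $T\hat{u}_k\geq 0$), and the monotonicity Property~\ref{transprop:monotone} of $T$, a straightforward induction on $k$ then yields $\hat{u}_k\leq T^k\mathbb{I}_{A^c}$ pointwise.

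Next, because $T$ is a linear map satisfying~\ref{coh:nonneghomogen}-\ref{coh:bounds} it is row-stochastic (as noted in Section~\ref{sec:introduction}, and as can also be seen directly: Property~\ref{coh:bounds} gives $T1\geq 1$, and applying the same property to $-1$ together with linearity gives $T1\leq 1$). Hence $T^k 1=1$, and linearity yields $T^k\mathbb{I}_{A^c}=1-T^k\mathbb{I}_A$. Combining with the previous step and restricting to $A^c$,
\begin{equation*}
[(T\vert_{A^c})^k 1](x)\leq 1-[T^k\mathbb{I}_A](x) \quad \text{for all } x\in A^c \text{ and all } k\in\nats.
\end{equation*}
By the reachability condition~\ref{reach_condition}, for each $x\in A^c$ there is some $n_x\in\nats$ with $[T^{n_x}\mathbb{I}_A](x)>0$, hence $[(T\vert_{A^c})^{n_x}1](x)<1$.

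Finally, Lemma~\ref{lemma:basic_properties_restriction} implies that the sequence $\bigl((T\vert_{A^c})^k 1\bigr)_{k\in\nats}$ is pointwise non-increasing in $k$ (since $T\vert_{A^c}1\leq 1$ and one applies the monotonicity of $T\vert_{A^c}$ to this inequality repeatedly), so $[(T\vert_{A^c})^k 1](x)<1$ holds for all $k\geq n_x$. Setting $n\coloneqq \max_{x\in A^c}n_x$, which is finite because $A^c$ is finite, then gives the desired uniform bound. I expect the main obstacle to be cleanly establishing the recursion $\hat{u}_{k+1}=\mathbb{I}_{A^c}\cdot T\hat{u}_k$, since this requires careful bookkeeping with the restriction operator to justify that the extension by zero commutes with the iteration; once this relation is in hand, the remainder is a short sandwich argument between the iterates on $\gambles$ and those on $\reals^{A^c}$.
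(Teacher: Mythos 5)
Your proof is correct and takes a genuinely different, more direct route than the paper. The paper proves the lemma by a double induction: it chooses the \emph{minimal} $n_x$ for each state, orders states by this quantity, and propagates the strict inequality $[(T\vert_{A^c})^m 1](x)<1$ from states with smaller $n_x$ to states with $n_x=m+1$, using Lemma~\ref{lemma:next_step_reachable_means_earlier_step_reachable} to produce the intermediate state $y\in A^c$ through which the mass leaks. Your argument sidesteps that machinery entirely: the recursion $\hat{u}_{k+1}=\mathbb{I}_{A^c}\cdot T\hat{u}_k$ (a clean unwinding of Definition~\ref{def:restriction}), together with the domination $\mathbb{I}_{A^c}\cdot T\hat{u}_k\leq T\hat{u}_k$ and monotonicity, gives $\hat{u}_k\leq T^k\mathbb{I}_{A^c}=1-T^k\mathbb{I}_A$ at once, so~\ref{reach_condition} delivers the strict inequality directly at each $x$ and at its $n_x$; the uniformity in $k\geq n_x$ then comes for free from the monotonicity already proved in Lemma~\ref{lemma:basic_properties_restriction}, and finiteness of $A^c$ gives the uniform $n$. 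Your version thus replaces the paper's induction on $n_x$ and its reliance on Lemma~\ref{lemma:next_step_reachable_means_earlier_step_reachable} with a one-shot sandwich estimate, which is both shorter and conceptually cleaner; the only trade-off is that the paper's proof, by tracking the specific state $y$, gives slightly more structural insight into \emph{how} the mass escapes, which is not needed here. All the supporting steps you flagged as potentially delicate (the recursion, $T1=1$ via~\ref{coh:bounds} and linearity, non-negativity of the iterates from~\ref{transprop:bounds}) check out.
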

\begin{proof}
Because $T$ satisfies~\ref{reach_condition}, for all $x\in A^c$ there is some unique \emph{minimal} $n_x\in\nats$ such that $[T^{n_x}\mathbb{I}_{A}](x)>0$. Let $n\coloneqq \max_{x\in\states} n_x$. We will show that $[(T\vert_{A^c})^k1](x)<1$ for all $x\in\states$, for all $k\in\nats$ with $k\geq n$.

We proceed by induction, as follows. Fix any $x\in A^c$ with $n_x=1$; then $[T\mathbb{I}_A](x)>0$. Because $1=\mathbb{I}_A+\mathbb{I}_{A^c}$, and since $[T1](x)=1$ due to Property~\ref{transprop:bounds}, it therefore follows from the fact that $T$ is a linear map that
\begin{equation*}
1=[T1](x)=[T\mathbb{I}_A](x)+[T\mathbb{I}_{A^c}](x)>[T\mathbb{I}_{A^c}](x)\,.
\end{equation*}
With $g\coloneqq\mathbb{I}_{A^c}$ we have $g\vert_{A^c}=1$ and $g\cdot\mathbb{I}_{A^c}=\mathbb{I}_{A^c}$, so it follows from the above together with Definition~\ref{def:restriction} that
\begin{equation*}
[T\vert_{A^c}1](x) = [T\mathbb{I}_{A^c}](x) < 1\,.
\end{equation*}
Hence for any $x\in A^c$ with $n_x=1$ it holds that $[T\vert_{A^c}1](x)<1$. This enables the induction base for $m=1$ in the following induction argument: suppose that for some $m\in\nats$ it holds that $[(T\vert_{A^c})^{m}1](x)<1$ for all $x\in A^c$ with $n_x\leq m$; we will show that then also $[(T\vert_{A^c})^{m+1}1](x)<1$ for all $x\in A^c$ with $n_x\leq m+1$.

First consider any $x\in A^c$ with $n_x\leq m$. By Lemma~\ref{lemma:basic_properties_restriction} we have that $(T\vert_{A^c})1\leq 1$ and therefore, that
\begin{equation*}
[(T\vert_{A^c})^{m+1}1](x)=[(T\vert_{A^c})^{m}(T\vert_{A^c}1)](x) \leq [(T\vert_{A^c})^{m}1](x)<1\,,
\end{equation*}
where we used the monotonicity of $(T\vert_{A^c})^{m}$ established in Lemma~\ref{lemma:basic_properties_restriction} for the first inequality, and the induction hypothesis that $[(T\vert_{A^c})^{m}1](x)<1$ since $n_x\leq m$ for the final inequality.
Hence we have $[(T\vert_{A^c})^{m+1}1](x)\leq [(T\vert_{A^c})^{m}1](x)<1$ for all $x\in A^c$ with $n_x\leq m$.

Now consider any $x\in A^c$ with $n_x=m+1$. Because $n_x$ was chosen to be minimal, this means that $[T^{m+1}\mathbb{I}_A](x)>0$ and $[T^k\mathbb{I}_A](x)=0$ for all $k\in\nats$ with $k\leq m$. Due to Lemma~\ref{lemma:next_step_reachable_means_earlier_step_reachable}, and since $m+1>1$, this implies that there is some $y\in A^c$ such that $[T\mathbb{I}_{\{y\}}](x)>0$ and $[T^{m}\mathbb{I}_A](y)>0$. This last inequality implies that $n_y\leq m$, since $n_y$ was chosen to be minimal. By the induction hypothesis, this implies that $[(T\vert_{A^c})^{m}1](y)<1$.

Let $g\in\gambles$ be such that  $g\vert_{A^c}=(T\vert_{A^c})^{m}1$, and $g(z)\coloneqq 0$ for all $z\in A$. Then $g\cdot\mathbb{I}_{A^c}=g$, whence it follows from Definition~\ref{def:restriction} that
\begin{align*}
[(T\vert_{A^c})^{m+1}1](x) &= \bigl[T\vert_{A^c}\bigl((T\vert_{A^c})^{m}1\bigr)\bigr](x) = \bigl[T(g\cdot\mathbb{I}_{A^c})\bigr](x) = \bigl[Tg\bigr](x)\,.
\end{align*}
Next, we note that $g=\sum_{z\in\states}g(z)\mathbb{I}_{\{z\}}$, and so, using the linear character of $T$, it holds that $\bigl[Tg\bigr](x)= \sum_{z\in\states} g(z)\bigl[T\mathbb{I}_{\{z\}}](x)$. Focussing on the summand for $z=y$ we find that
\begin{align*}
\sum_{z\in\states} g(z)\bigl[T\mathbb{I}_{\{z\}}](x) &= g(y)[T\mathbb{I}_{\{y\}}](x) + \sum_{z\in\states\setminus\{y\}} g(z)\bigl[T\mathbb{I}_{\{z\}}\bigr](x) \\
 &< [T\mathbb{I}_{\{y\}}](x)+ \sum_{z\in\states\setminus\{y\}} g(z)\bigl[T\mathbb{I}_{\{z\}}\bigr](x) \\
 &\leq [T\mathbb{I}_{\{y\}}](x)+ \sum_{z\in\states\setminus\{y\}} \bigl[T\mathbb{I}_{\{z\}}\bigr](x) = \sum_{z\in\states} \bigl[T\mathbb{I}_{\{z\}}\bigr](x) =1\,,
\end{align*}
where the strict inequality holds because, as established above, $[T\mathbb{I}_{\{y\}}](x)>0$ and $g(y)=[(T\vert_{A^c})^{m}1](y)<1$; where the second inequality follows because $g(z)=0$ for all $z\in A$, because $g(z)\leq 1$ for all $z\in A^c$ due to Lemma~\ref{lemma:basic_properties_restriction}, and because $[T\mathbb{I}_{\{z\}}](x)\geq 0$ for all $z\in\states$ because $T$ satisfies~\ref{transprop:bounds}; and where the final equality used that $\sum_{z\in\states}\mathbb{I}_{\{z\}}=1$ together with the fact that $T$ is a linear map that satisfies~\ref{transprop:bounds}.

Hence we find that $[(T\vert_{A^c})^{m+1}1](x)<1$ for all $x\in A^c$ with $n_x=m+1$. Because we already established that $[(T\vert_{A^c})^{m+1}1](x)<1$ for all $x\in A^c$ with $n_x\leq m$, we conclude that, indeed, $[(T\vert_{A^c})^{m+1}1](x)<1$ for all $x\in A^c$ with $n_x\leq m+1$. This concludes the proof of the induction step, and hence our induction argument implies that, for any $m\in\nats$, $[(T\vert_{A^c})^{m}1](x)<1$ for all $x\in A^c$ with $n_x\leq m$. Because $n=\max_{x\in A^c}n_x$ satisfies $n\geq n_x$ for all $x\in A^c$, it follows that, as claimed, for all $k\geq n$ we have $n_x\leq n\leq k$ and hence $[(T\vert_{A^c})^{k}1](x)<1$ for all $x\in A^c$.
\qed
\end{proof}

\begin{corollary}\label{cor:norm_restriction_eventually_small_enough}
Let $T:\gambles\to\gambles$ be a linear map that satisfies~\ref{coh:nonneghomogen}-\ref{coh:bounds} and the reachability condition~\ref{reach_condition}. Then there is some $n\in\nats$ such that $\smash{\norm{(T\vert_{A^c})^n}<1}$.
\end{corollary}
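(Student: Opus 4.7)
The plan is to combine the pointwise strict bound from Lemma~\ref{lemma:complement_eventually_moves} with the linearity and monotonicity of $(T\vert_{A^c})^n$ from Lemma~\ref{lemma:basic_properties_restriction}, and then exploit the finiteness of $A^c$ to upgrade the pointwise strict inequalities to a single uniform bound strictly less than one.

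First, I would invoke Lemma~\ref{lemma:complement_eventually_moves} to obtain some $n\in\nats$ such that $[(T\vert_{A^c})^n 1](x)<1$ for every $x\in A^c$. Since $A^c$ is finite, the quantity $c\coloneqq \max_{x\in A^c}[(T\vert_{A^c})^n 1](x)$ is attained and satisfies $c<1$. Thus $(T\vert_{A^c})^n 1\leq c$ as elements of $\reals^{A^c}$, and by Lemma~\ref{lemma:basic_properties_restriction} also $0\leq (T\vert_{A^c})^n 1$.

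Next, fix any $f\in\reals^{A^c}$ with $\norm{f}\leq 1$, so that $-1\leq f\leq 1$. Because $T$ is linear, its restriction $T\vert_{A^c}$ is linear (as noted after Definition~\ref{def:restriction}), and hence so is $(T\vert_{A^c})^n$. Applying the monotonicity property of Lemma~\ref{lemma:basic_properties_restriction} to the inequalities $-1\leq f\leq 1$ gives
\begin{equation*}
(T\vert_{A^c})^n(-1) \leq (T\vert_{A^c})^n f \leq (T\vert_{A^c})^n 1\,,
\end{equation*}
and by linearity $(T\vert_{A^c})^n(-1) = -(T\vert_{A^c})^n 1$. Hence $\abs{[(T\vert_{A^c})^n f](x)}\leq [(T\vert_{A^c})^n 1](x)\leq c$ for every $x\in A^c$, so $\norm{(T\vert_{A^c})^n f}\leq c$. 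Taking the supremum over $f$ with $\norm{f}\leq 1$ yields $\norm{(T\vert_{A^c})^n}\leq c<1$, as required.

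The only mildly delicate step is the two-sided bound: monotonicity alone gives an upper bound, and the lower bound is recovered via linearity applied to $-f$. Everything else is essentially a repackaging of already established results, so I do not expect any genuine obstacle.
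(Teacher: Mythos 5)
Your proof is correct and follows essentially the same route as the paper's: apply Lemma~\ref{lemma:complement_eventually_moves} to obtain $n$ with pointwise strict bounds on $(T\vert_{A^c})^n 1$, use finiteness of $A^c$ to get a uniform bound (your $c$ is exactly $\norm{(T\vert_{A^c})^n 1}$ since $0\leq (T\vert_{A^c})^n 1$), and then sandwich $(T\vert_{A^c})^n f$ between $\pm(T\vert_{A^c})^n 1$ via linearity and the monotonicity from Lemma~\ref{lemma:basic_properties_restriction} before taking the supremum over the unit ball.
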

\begin{proof}
By Lemma~\ref{lemma:complement_eventually_moves}, there is some $n\in\nats$ such that $[(T\vert_{A^c})^n1](x)<1$ for all $x\in A^c$. Since $0\leq (T\vert_{A^c})^n1$ due to Lemma~\ref{lemma:basic_properties_restriction}, this implies that $\norm{(T\vert_{A^c})^n1}<1$. It therefore suffices to show that $\smash{\norm{(T\vert_{A^c})^n}\leq \norm{(T\vert_{A^c})^n1}}$.

Consider any $f\in\reals^{A^c}$ such that $\norm{f}\leq 1$. Then $-1\leq f\leq 1$ due to the definition of $\norm{f}$. Using the linear character of $T\vert_{A^c}$ (and therefore of $(T\vert_{A^c})^n$) it follows that
\begin{equation*}
-(T\vert_{A^c})^n1 = (T\vert_{A^c})^n(-1) \leq (T\vert_{A^c})^nf \leq (T\vert_{A^c})^n1\,,
\end{equation*}
where the inequalities use the monotonicity properties of $(T\vert_{A^c})^n$ established in Lemma~\ref{lemma:basic_properties_restriction}. This implies that $\norm{(T\vert_{A^c})^nf}\leq \norm{(T\vert_{A^c})^n1}$. Because this is true for every $f\in\reals^{A^c}$ such that $\norm{f}\leq 1$, it follows that
\begin{equation*}
\norm{(T\vert_{A^c})^n} = \sup\bigl\{ \norm{(T\vert_{A^c})^nf}\,:\,f\in\reals^{A^c},\norm{f}\leq 1\bigr\}\leq \norm{(T\vert_{A^c})^n1}\,,
\end{equation*}
which concludes the proof.
\qed


\end{proof}

The next result summarizes the results established above, and will be crucial for the remainder of this paper. 
\begin{lemma}\label{lemma:submatrix_powers_vanish}
Let $\underline{T}:\gambles\to\gambles$ be a map that satisfies~\ref{coh:nonneghomogen}-\ref{coh:bounds} and the reachability condition~\ref{reach_condition}, and let $\mathcal{T}$ be its dual representation. For any $T\in\mathcal{T}$, it holds that $\lim_{k\to+\infty} (T\vert_{A^c})^k=0$ and, moreover, that $(I-T\vert_{A^c})^{-1} = \sum_{k=0}^{+\infty}(T\vert_{A^c})^k$, where $I$ is the identity map on $\reals^{ A^c}$.
\end{lemma}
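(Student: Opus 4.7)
The plan is to assemble the pieces already established: reachability transfers from $\underline{T}$ to each linear element $T$ of its dual representation, the restriction $T\vert_{A^c}$ eventually has operator norm strictly less than $1$, and then the spectral-theoretic tools (Lemma~\ref{lemma:bound_spectral_radius}, Corollary~\ref{cor:powers_vanish}, Lemma~\ref{lemma:resolvent_is_series}) deliver both conclusions directly.

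First, I would fix an arbitrary $T\in\mathcal{T}$. By Proposition~\ref{prop:dual_set_representation}, $T$ is a linear map from $\gambles$ to $\gambles$ satisfying \ref{coh:nonneghomogen}--\ref{coh:bounds}, and by Lemma~\ref{lemma:lower_reachability_is_precise_reachability} it also satisfies the reachability condition~\ref{reach_condition}. Therefore Corollary~\ref{cor:norm_restriction_eventually_small_enough} applies to $T$, yielding some $n\in\nats$ with $\norm{(T\vert_{A^c})^n}<1$. Corollary~\ref{cor:lower_trans_is_bounded} together with Lemma~\ref{lemma:restriction_bounded_is_bounded} ensures that $T\vert_{A^c}$ is a bounded linear map on $\reals^{A^c}$, so the spectral notions of the previous subsection apply.

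Next, I would invoke Lemma~\ref{lemma:bound_spectral_radius} to conclude $\rho(T\vert_{A^c})\leq \norm{(T\vert_{A^c})^n}^{1/n}<1$. Corollary~\ref{cor:powers_vanish} then immediately yields $\lim_{k\to+\infty}(T\vert_{A^c})^k=0$, which is the first claim. For the second claim, since $1>\rho(T\vert_{A^c})$, Lemma~\ref{lemma:resolvent_is_series} with $\lambda=1$ gives that the series $\sum_{k=0}^{+\infty}(T\vert_{A^c})^k$ converges in norm to the resolvent $R(1,T\vert_{A^c}) = (I-T\vert_{A^c})^{-1}$. In particular $1\notin\sigma(T\vert_{A^c})$, so the inverse exists as a bounded linear map on $\reals^{A^c}$, which is exactly the identity we need.

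The proof is essentially bookkeeping: no new estimates are required because Corollary~\ref{cor:norm_restriction_eventually_small_enough} already did the hard analytic work. The only mild subtlety is making sure one is entitled to apply the linear-spectral machinery to each $T\in\mathcal{T}$ (as opposed to just to $\underline{T}$, which is nonlinear in general), but this is precisely what Lemma~\ref{lemma:lower_reachability_is_precise_reachability} is for. No obstacle beyond chaining these citations in the correct order.
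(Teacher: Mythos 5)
Your proposal is correct and follows essentially the same route as the paper: it reduces to an arbitrary $T\in\mathcal{T}$ via Proposition~\ref{prop:dual_set_representation} and Lemma~\ref{lemma:lower_reachability_is_precise_reachability}, applies Corollary~\ref{cor:norm_restriction_eventually_small_enough} and Lemma~\ref{lemma:bound_spectral_radius} to get $\rho(T\vert_{A^c})<1$, and then reads off both conclusions from Corollary~\ref{cor:powers_vanish} and Lemma~\ref{lemma:resolvent_is_series}. The only cosmetic difference is your explicit remark that $1\notin\sigma(T\vert_{A^c})$, which the paper leaves implicit.
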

\begin{proof}
Fix any $T\in\mathcal{T}$. Then it follows from Proposition~\ref{prop:dual_set_representation} and Lemma~\ref{lemma:lower_reachability_is_precise_reachability} that $T$ is a linear map from $\gambles$ to $\gambles$ that satisfies~\ref{coh:nonneghomogen}-\ref{coh:bounds} and~\ref{reach_condition}. By Corollary~\ref{cor:norm_restriction_eventually_small_enough}, this implies that there is some $n\in\nats$ such that $\norm{(T\vert_{A^c})^n}<1$, which implies that also $\smash{\norm{(T\vert_{A^c})^n}^{\frac{1}{n}}<1}$. Because $T\vert_{A^c}$ is a linear map (due to Definition~\ref{def:restriction}) that is bounded by Corollary~\ref{cor:lower_trans_is_bounded} and Lemma~\ref{lemma:restriction_bounded_is_bounded}, Lemma~\ref{lemma:bound_spectral_radius} therefore implies that $\rho(T\vert_{A^c})\leq \smash{\norm{(T\vert_{A^c})^n}^{\frac{1}{n}}}<1$.

By Corollary~\ref{cor:powers_vanish} this implies that $\lim_{k\to+\infty}(T\vert_{A^c})^k=0$ and, by Lemma~\ref{lemma:resolvent_is_series}, that $\sum_{k=0}^{+\infty} (T\vert_{A^c})^k=R(1,T\vert_{A^c})=(I-T\vert_{A^c})^{-1}$.
\qed
\end{proof}


Finally, we need the following result, which identifies the solution of the linear version of the system in Equation~\eqref{eq:fundamental_system}.
\begin{proposition}\label{prop:solution_of_linear_system}
Let $\underline{T}:\gambles\to\gambles$ be a map that satisfies~\ref{coh:nonneghomogen}-\ref{coh:bounds} and the reachability condition~\ref{reach_condition}, and let $\mathcal{T}$ be its dual representation. Then, for all $T\in\mathcal{T}$, there exists a unique solution $h_T\in\gambles$ of the linear system $h_T=\mathbb{I}_{A^c}+\mathbb{I}_{A^c}\cdot Th_T$. 
This $h_T$ is non-negative, and in particular, $h_T\vert_A=0$ and $h_T\vert_{A^c}=(I-T\vert_{A^c})^{-1}1$.
\end{proposition}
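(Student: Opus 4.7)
The plan is to reduce the global equation on $\gambles$ to a linear equation on the subspace $\reals^{A^c}$, where Lemma~\ref{lemma:submatrix_powers_vanish} immediately gives invertibility, and then to verify that extending the unique solution on $A^c$ by zero on $A$ reproduces a solution of the original system.

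First, I would observe that any solution $h_T \in \gambles$ of $h_T = \mathbb{I}_{A^c} + \mathbb{I}_{A^c}\cdot Th_T$ must satisfy $h_T(x) = 0$ for every $x\in A$, because both terms on the right are multiplied by $\mathbb{I}_{A^c}$ and vanish on $A$. In particular, $h_T\cdot\mathbb{I}_{A^c} = h_T$ for any solution, so evaluating the equation at $x\in A^c$ gives
\begin{equation*}
h_T(x) = 1 + [Th_T](x) = 1 + \bigl[T(h_T\cdot\mathbb{I}_{A^c})\bigr](x) = 1 + [T\vert_{A^c}(h_T\vert_{A^c})](x),
\end{equation*}
where the last equality uses Definition~\ref{def:restriction}. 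Restricting to $A^c$ therefore yields the linear equation $(I - T\vert_{A^c})(h_T\vert_{A^c}) = 1$ in $\reals^{A^c}$.

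Next I would invoke Lemma~\ref{lemma:submatrix_powers_vanish}, which guarantees that $(I - T\vert_{A^c})$ is invertible on $\reals^{A^c}$ with inverse $\sum_{k=0}^{+\infty}(T\vert_{A^c})^k$. This forces $h_T\vert_{A^c} = (I - T\vert_{A^c})^{-1}1$, and combined with $h_T\vert_A = 0$ this uniquely determines $h_T$, establishing uniqueness. For existence, I would define $h_T$ to be zero on $A$ and equal to $(I - T\vert_{A^c})^{-1}1$ on $A^c$, and then verify the original equation by cases: for $x\in A$ both sides are $0$, and for $x\in A^c$, using $h_T\cdot\mathbb{I}_{A^c}=h_T$ and Definition~\ref{def:restriction}, the right-hand side collapses to $1 + [T\vert_{A^c}(h_T\vert_{A^c})](x) = h_T(x)$ by construction.

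Finally, for non-negativity, I would use the series representation from Lemma~\ref{lemma:submatrix_powers_vanish} to write
\begin{equation*}
h_T\vert_{A^c} = \sum_{k=0}^{+\infty}(T\vert_{A^c})^k 1,
\end{equation*}
and then invoke Lemma~\ref{lemma:basic_properties_restriction}, which says that each term $(T\vert_{A^c})^k 1$ is non-negative; together with $h_T\vert_A = 0$ this gives $h_T\geq 0$ on all of $\states$. No step here is really an obstacle since Lemma~\ref{lemma:submatrix_powers_vanish} does all the heavy lifting; the main care needed is simply in translating between the global equation on $\gambles$ and its restriction to $\reals^{A^c}$ via Definition~\ref{def:restriction}, which requires noting at the outset that any solution must vanish on $A$.
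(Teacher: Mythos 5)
Your proof is correct and follows the same basic reduction as the paper's: pass to the restricted linear system on $\reals^{A^c}$, invoke Lemma~\ref{lemma:submatrix_powers_vanish} for invertibility of $I-T\vert_{A^c}$, and extend by zero on $A$. The one genuine difference is the non-negativity argument. You obtain $h_T\vert_{A^c}\geq 0$ directly from the Neumann-series representation $(I-T\vert_{A^c})^{-1}1=\sum_{k=0}^{+\infty}(T\vert_{A^c})^k1$ together with the term-wise bound $0\leq(T\vert_{A^c})^k1$ from Lemma~\ref{lemma:basic_properties_restriction} (with the $k=0$ term trivially equal to $1$); since the partial sums converge because the operator series converges in norm, the limit inherits non-negativity. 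The paper instead proves non-negativity by a contradiction argument: it takes $c=\min_{y\in\states}h_T(y)$ with minimiser $y$, and uses constant additivity (Property~\ref{transprop:constant_additive}) and monotonicity to show that $y\in A^c$ would force $0=h_T(y)-c\geq 1$, so $y\in A$ and $c=0$. Both arguments are valid; yours is arguably more transparent, since it reuses the series you already had in hand, whereas the paper's avoids any appeal to the series and works purely from the fixed-point equation. A minor point of care: Lemma~\ref{lemma:basic_properties_restriction} is stated for $n\in\nats$ (i.e.\ $n\geq1$), so you should note explicitly that the $k=0$ term $(T\vert_{A^c})^01=1$ is handled trivially.
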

\begin{proof}	
Let $h_T$ be a vector such that $h_T\vert_A\coloneqq 0$ and $h_T\vert_{A^c}\coloneqq (I-T\vert_{A^c})^{-1}1$, where we note that $(I-T\vert_{A^c})^{-1}$ exists by Lemma~\ref{lemma:submatrix_powers_vanish}. Then $h_T\in\gambles$; specifically, $h_T(x)$ is finite for all $x\in\states$, because $(I-T\vert_{A^c})^{-1}$ is the inverse of a bounded linear map and thus bounded itself.

It is easily verified that $h_T$ satisfies the linear system of interest; for any $x\in A$ we trivially have
\begin{equation*}
h_T(x) = 0 = \mathbb{I}_{A^c}(x) + \mathbb{I}_{A^c}(x)\cdot[Th_T](x)\,.
\end{equation*}
Conversely, on $A^c$ it holds that $h_T\vert_{A^c} = (I-T\vert_{A^c})^{-1}1$, so, multiplying both sides with $(I-T\vert_{A^c})$, we obtain $h_T\vert_{A^c} = 1 + T\vert_{A^c}(h_T\vert_{A^c})$.
We note that, because $h_T\vert_A=0$ it holds that $h_T\cdot\mathbb{I}_{A^c}=h_T$, and hence
\begin{equation*}
T\vert_{A^c}(h_T\vert_{A^c}) = \bigl(T(h_T\cdot\mathbb{I}_{A^c})\bigr)\vert_{A^c} = \bigl(Th_T\bigr)\vert_{A^c}\,,
\end{equation*}
using Definition~\ref{def:restriction}, and so
\begin{equation*}
h_T\vert_{A^c} = 1 + \bigl(Th_T\bigr)\vert_{A^c} = \mathbb{I}_{A^c}\vert_{A^c} + \mathbb{I}_{A^c}\vert_{A^c}\cdot\bigl(Th_T\bigr)\vert_{A^c}\,. 
\end{equation*}
This establishes that $h_T$ solves the linear system of interest. 

Let us prove that $h_T$ is non-negative. Let $c\coloneqq \min_{y\in\states}h_T(y)$, and let $y\in \states$ be such that $h_T(y)=c$. Then $c\in\reals$ because $h_T\in\gambles$. Now suppose \emph{ex absurdo} that $y\in A^c$. Then, since $h_T-c\geq 0$, and because, as established above, $h_T\vert_{A^c} = 1 + \bigl(Th_T\bigr)\vert_{A^c}$, it follows that
\begin{align*}
h_T\vert_{A^c}-c= 1 + (Th_T)\vert_{A^c}-c = 1 + \bigl(T(h_T-c)\bigr)\big\vert_{A^c} \geq 1 + \bigl(T(0)\bigr)\big\vert_{A^c} = 1\,,
\end{align*}
where the second equality used Property~\ref{transprop:constant_additive} and where the inequality used Property~\ref{transprop:monotone}, which we can do because $T$ satisfies~\ref{coh:nonneghomogen}-\ref{coh:bounds} due to Proposition~\ref{prop:dual_set_representation}; and where the final equality used the linearity of $T$. Because $y\in A^c$, this implies in particular that $0=h_T(y)-c\geq 1$. From this contradiction, it follows that $y\in A$. Since $h_T(x)=0$ for all $x\in A$, this implies that $c=h_T(y)=0$, which means that $h_T$ is non-negative.

It remains to establish that $h_T$ is the unique solution in $\gambles$. To this end, let $g\in\gambles$ be any real-valued solution of this system, i.e. suppose that also $g=\mathbb{I}_{A^c}+\mathbb{I}_{A^c}\cdot Tg$. Then some algebra analogous to the above yields that $g\vert_A=0$ and $g\vert_{A^c}=(I-T\vert_{A^c})^{-1}1$, thus $g=h_T$, whence the solution is unique. 
\qed
\end{proof}

We can now prove our first main result, which was stated in Section~\ref{sec:introduction}.

\quad\newline
\noindent\emph{Proof of Proposition~\ref{prop:existence_is_unique_and_finite}}.\,\,
Let $\underline{h}$ be the minimal non-negative solution to~\eqref{eq:fundamental_system}; this solution exists by Proposition~\ref{prop:basic_existence}, although it could be in $(\reals\cup\{+\infty\})^{\states}$. Let us first establish that in fact $\underline{h}\in\gambles$. To this end, we note that by Proposition~\ref{prop:solutions_are_bounds_and_reached}, there is some $T\in\mathcal{T}$ such that $\underline{h}=h_T$, where $h_T$ is the minimal non-negative solution of the linear system $h_T=\mathbb{I}_{A^c}+\mathbb{I}_{A^c}\cdot T h_T$. 

Because $\underline{T}$ satisfies ~\ref{coh:nonneghomogen}-\ref{coh:bounds} and~\ref{reach_condition}, it follows from Proposition~\ref{prop:solution_of_linear_system} that the linear system $h_T=\mathbb{I}_{A^c}+\mathbb{I}_{A^c}\cdot T h_T$ has a unique solution in $\gambles$ that is non-negative, which equals the minimal non-negative solution $h_T$ due to Corollary~\ref{cor:unique_solution_is_minimal}. Hence $h_T\in\gambles$, and because $\underline{h}=h_T$, it follows that also $\underline{h}\in\gambles$.

%

To establish that the solution of~\eqref{eq:fundamental_system} is unique in $\gambles$, let $h_T$ and $h_S$ be any two solutions of~\eqref{eq:fundamental_system} in $\gambles$; without further assumptions, we will show that $h_T\leq h_S$. 
First, for any $x\in A$, it holds that 
$h_T(x) = \mathbb{I}_{A^c}(x) + \mathbb{I}_{A^c}(x)\cdot [\underline{T}\,h_T](x) = 0$, 
and, by the same argument, $h_S(x)=0$, so $h_T$ and $h_S$ agree on $A$.

Next, due to Proposition~\ref{prop:dual_set_representation}, there are $T,S\in\mathcal{T}$ such that $\underline{T}\,h_T=Th_T$ and $\underline{T}\,h_S=Sh_S$.
It also follows from Proposition~\ref{prop:dual_set_representation} that 
$Th_T = \underline{T}\,h_T \leq Sh_T$, 
because $S\in\mathcal{T}$. In turn, and using that $h_T(x)=0$ for all $x\in A$, and that therefore $h_T\cdot\mathbb{I}_{A^c}=h_T$, this implies that
\begin{equation*}
T\vert_{A^c}(h_T\vert_{A^c}) = \bigl(Th_T\bigr)\vert_{A^c} \leq \bigl(Sh_T\bigr)\vert_{A^c} = S\vert_{A^c}(h_T\vert_{A^c})\,.
\end{equation*}

Substituting this inequality into the system~\eqref{eq:fundamental_system} restricted to $A^c$ yields
\begin{align*}
h_T\vert_{A^c} = 1 + T\vert_{A^c}\,(h_T\vert_{A^c}) \leq 1 + S\vert_{A^c}\,(h_T\vert_{A^c})\,.
\end{align*}
Now note that $S\vert_{A^c}$ is a monotone map due to Proposition~\ref{prop:dual_set_representation} and Lemma~\ref{lemma:basic_properties_restriction}. Hence, and using the fact that $S\vert_{A^c}$ is a linear map, expanding this inequality into the right-hand side gives, after $n\in\nats$ expansions,
\begin{equation*}
h_T\vert_{A^c} \leq (S\vert_{A^c})^n(h_T\vert_{A^c}) + \sum_{k=0}^{n-1} (S\vert_{A^c})^k1\,.
\end{equation*}
Taking limits in $n$, and using that $\lim_{n\to+\infty}(S\vert_{A^c})^n=0$ and $\sum_{k=0}^{+\infty} (S\vert_{A^c})^k=(I-S\vert_{A^c})^{-1}$ due to Lemma~\ref{lemma:submatrix_powers_vanish}, it follows that
\begin{equation*}
h_T\vert_{A^c} \leq (I-S\vert_{A^c})^{-1}1 = h_S\vert_{A^c}\,,
\end{equation*}
where we used Proposition~\ref{prop:solution_of_linear_system} for the equality, which we can do because, since $h_S$ satisfies Equation~\eqref{eq:fundamental_system} by assumption and since $\underline{T}\,h_S=Sh_S$ by the selection of $S\in\mathcal{T}$, it holds that  $h_S=\mathbb{I}_{A^c}+\mathbb{I}_{A^c}\cdot S\,h_S$. Hence we have found that $h_T\vert_{A^c}\leq h_S\vert_{A^c}$. However, we can now repeat the above argument, \emph{mutatis mutandis}, to establish that also $h_S\vert_{A^c}\leq h_T\vert_{A^c}$. Hence we conclude that $h_T$ and $h_S$ agree on $A^c$. Because we already established that they agree on $A$, we conclude that $h_T=h_S$, and that therefore Equation~\eqref{eq:fundamental_system} has a unique solution in $\gambles$. Because $\underline{h}$ is \emph{a} solution to Equation~\eqref{eq:fundamental_system} in $\gambles$, we conclude that $\underline{h}$ must be the unique solution. Because $\underline{h}$ is non-negative, it follows that the unique solution in $\gambles$ is non-negative.
\qed

\section{A Computational Method}\label{sec:computational_method}

The computational method that we propose in Proposition~\ref{prop:algorithm_works} works by iterating over specific choices of the extreme points of the dual representation $\mathcal{T}$ of $\underline{T}$. The next result establishes that these extreme points exist and that $\underline{T}\,(\cdot)$ obtains its value in an extreme point of $\mathcal{T}$; this is well-known, so we omit the proof.
\begin{lemma}\label{lemma:extreme_points}
Let $\underline{T}:\gambles\to\gambles$ be a map that satisfies~\ref{coh:nonneghomogen}-\ref{coh:bounds}, and let $\mathcal{T}$ be its dual representation. Then $\mathcal{T}$ has a non-empty set of extreme points. Moreover, for all $f\in\gambles$, there is an extreme point $T$ of $\mathcal{T}$ such that $\underline{T}\,f=Tf$.
\end{lemma}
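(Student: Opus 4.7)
\noindent\textbf{Proof plan for Lemma~\ref{lemma:extreme_points}.}
The natural plan is to reduce everything to finite-dimensional geometry via the matrix identification from Proposition~\ref{prop:dual_set_representation}, then exploit the separately specified rows property to control extreme points row by row. Because every $T\in\mathcal{T}$ is a linear map satisfying~\ref{coh:nonneghomogen}-\ref{coh:bounds}, each $T$ is row-stochastic, so $\mathcal{T}$ can be identified with a non-empty, closed, convex subset of the compact cube $[0,1]^{\abs{\states}\times\abs{\states}}$. Hence $\mathcal{T}$ is a non-empty compact convex subset of a finite-dimensional Euclidean space, and by the Krein--Milman theorem (or elementary finite-dimensional convex geometry) has a non-empty set of extreme points. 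The same reasoning applied to each row set $\mathcal{T}_x\subseteq \reals^{\states}$ shows that $\mathcal{T}_x$ is non-empty, compact, and convex for every $x\in\states$, and in particular has extreme points.

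For the second claim, I would fix $f\in\gambles$ and construct the desired extreme point row by row. For each $x\in\states$, the map $T_x\mapsto T_xf$ is a continuous linear functional on the non-empty compact convex set $\mathcal{T}_x$, so by Bauer's maximum principle it attains its infimum at some extreme point $T^*_x$ of $\mathcal{T}_x$. By Proposition~\ref{prop:dual_set_representation} we have $[\underline{T}\,f](x)=\inf_{T\in\mathcal{T}}[Tf](x)=\inf_{T_x\in\mathcal{T}_x}T_xf=T^*_xf$, where the second equality uses the separately specified rows structure of $\mathcal{T}$. Now assemble $T^*:\gambles\to\gambles$ from the rows $\{T^*_x\}_{x\in\states}$; separate specification of rows guarantees $T^*\in\mathcal{T}$, and by construction $[T^*f](x)=T^*_xf=[\underline{T}\,f](x)$ for every $x$, so $\underline{T}\,f=T^*f$.

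It remains to verify that this assembled $T^*$ is indeed an extreme point of $\mathcal{T}$. Suppose $T^*=\alpha T^1+(1-\alpha)T^2$ with $T^1,T^2\in\mathcal{T}$ and $\alpha\in(0,1)$. Evaluating both sides at each row, $T^*_x=\alpha T^1_x+(1-\alpha)T^2_x$ with $T^1_x,T^2_x\in\mathcal{T}_x$; since $T^*_x$ is extreme in $\mathcal{T}_x$, this forces $T^1_x=T^2_x=T^*_x$ for every $x$, and hence $T^1=T^2=T^*$. The main delicate point is precisely this last step, which is where the separately specified rows hypothesis is essential: without it one could only conclude extremality with respect to row-wise convex combinations, not global ones. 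Given that structural property, the argument reduces cleanly to the standard Bauer/Krein--Milman facts, which is presumably why the author regards the result as well-known enough to omit.
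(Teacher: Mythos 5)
The paper omits the proof of this lemma entirely, invoking ``well-known''; so there is no in-text argument to compare against. Your proposal is correct and is the standard route one would take. The key observations are exactly right: by Proposition~\ref{prop:dual_set_representation} every $T\in\mathcal{T}$ is row-stochastic, so $\mathcal{T}$ embeds as a non-empty closed (hence compact) convex subset of a finite-dimensional cube and thus has extreme points; the separately specified rows hypothesis gives the product decomposition $\mathcal{T}\cong\prod_{x\in\states}\mathcal{T}_x$ with each $\mathcal{T}_x$ non-empty, compact (as the image of $\mathcal{T}$ under a continuous projection, or equivalently as a factor in the product), and convex; the infimum defining $[\underline{T}\,f](x)$ decouples by rows; Bauer's maximum principle (or its elementary finite-dimensional version) gives a minimising extreme point of each $\mathcal{T}_x$; and the product structure guarantees both that the assembled map lies in $\mathcal{T}$ and that a rowwise tuple of extreme points is itself an extreme point of $\mathcal{T}$. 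You correctly identified the final step as the one place where separately specified rows is genuinely needed --- without it the rowwise extremality argument would not propagate to global extremality, and indeed one would not even be free to mix-and-match the optimal rows into a single element of $\mathcal{T}$. Nothing is missing.
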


We can now state the second main result of this work.
\begin{proposition}\label{prop:algorithm_works}
Let $\underline{T}:\gambles\to\gambles$ be a map that satisfies~\ref{coh:nonneghomogen}-\ref{coh:bounds} and the reachability condition~\ref{reach_condition}, and let $\mathcal{T}$ be its dual representation. Let $T_1\in\mathcal{T}$ be any extreme point of $\mathcal{T}$ and, for all $n\in\nats$, let $h_n$ be the unique solution in $\gambles$ of the linear system $h_n=\mathbb{I}_{A^c}+\mathbb{I}_{A^c}\cdot T_n h_n$, and let $T_{n+1}$ be an extreme point of $\mathcal{T}$ such that $\underline{T}h_{n}=T_{n+1}h_n$. 
Then the sequence $\{h_n\}_{n\in\nats}$ is non-increasing, and its limit $h_*\coloneqq \lim_{n\to+\infty}h_n$ is the unique solution of~\eqref{eq:fundamental_system} in $\gambles$.
\end{proposition}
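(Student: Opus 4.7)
The plan is to establish that $\{h_n\}$ is a monotonically non-increasing sequence of non-negative elements of $\gambles$, hence converges pointwise to some $h_*\in\gambles$, and then to pass to the limit in the defining recursion to show that $h_*$ solves Equation~\eqref{eq:fundamental_system}; uniqueness is then immediate from Proposition~\ref{prop:existence_is_unique_and_finite}. To see that each $h_n$ is well-defined, note that $T_n\in\mathcal{T}$ is a linear map satisfying~\ref{coh:nonneghomogen}--\ref{coh:bounds} by Proposition~\ref{prop:dual_set_representation}, and inherits~\ref{reach_condition} from $\underline{T}$ by Lemma~\ref{lemma:lower_reachability_is_precise_reachability}. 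Hence Proposition~\ref{prop:solution_of_linear_system} produces a unique, non-negative $h_n\in\gambles$ with $h_n\vert_A=0$ and $h_n\vert_{A^c}=(I-T_n\vert_{A^c})^{-1}1$.

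For monotonicity, the key observation is that by the choice of $T_{n+1}$ and the dual representation, $T_{n+1}h_n = \underline{T}\,h_n \leq T_n h_n$. Substituting this into the defining equation for $h_n$ gives $h_n \geq \mathbb{I}_{A^c} + \mathbb{I}_{A^c}\cdot T_{n+1}h_n$; restricting to $A^c$, and using $h_n\vert_A=0$ and Definition~\ref{def:restriction}, yields $h_n\vert_{A^c} \geq 1 + T_{n+1}\vert_{A^c}(h_n\vert_{A^c})$. Iterating this inequality $k$ times, using the monotonicity and linearity of $T_{n+1}\vert_{A^c}$ established in Lemma~\ref{lemma:basic_properties_restriction}, gives
\begin{equation*}
h_n\vert_{A^c} \geq \sum_{j=0}^{k-1}(T_{n+1}\vert_{A^c})^j 1 + (T_{n+1}\vert_{A^c})^k(h_n\vert_{A^c})\,.
\end{equation*}
Letting $k\to+\infty$ and applying Lemma~\ref{lemma:submatrix_powers_vanish} then gives $h_n\vert_{A^c} \geq (I-T_{n+1}\vert_{A^c})^{-1}1 = h_{n+1}\vert_{A^c}$, and since $h_n\vert_A=h_{n+1}\vert_A=0$, the desired $h_n \geq h_{n+1}$ follows.

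Because $\{h_n\}$ is non-increasing and bounded below by $0$, it converges pointwise to some $h_*\in\gambles$. To identify $h_*$ as the solution of~\eqref{eq:fundamental_system}, pass to the limit in $h_{n+1} = \mathbb{I}_{A^c} + \mathbb{I}_{A^c}\cdot T_{n+1}h_{n+1}$. The non-expansiveness of $\underline{T}$ (property~\ref{transprop:nonexpand}) gives $\underline{T}\,h_n \to \underline{T}\,h_*$, and by construction $T_{n+1}h_n = \underline{T}\,h_n$. Using the uniform bound $\norm{T_{n+1}} \leq 1$ from Corollary~\ref{cor:lower_trans_is_bounded} together with linearity, one has $\norm{T_{n+1}h_{n+1} - T_{n+1}h_n} \leq \norm{h_{n+1} - h_n} \to 0$. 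Combining these, $T_{n+1}h_{n+1} \to \underline{T}\,h_*$, so in the limit $h_* = \mathbb{I}_{A^c} + \mathbb{I}_{A^c}\cdot \underline{T}\,h_*$. By Proposition~\ref{prop:existence_is_unique_and_finite}, this equation has a unique solution in $\gambles$, so $h_*$ must be that solution.

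The principal obstacle lies in the final limit argument: because the linear maps $T_{n+1}$ themselves vary with $n$, one cannot invoke continuity of a single operator to pass to the limit in $T_{n+1}h_{n+1}$. The resolution is to split the difference $T_{n+1}h_{n+1} - \underline{T}\,h_*$ via the triangle inequality into a term $T_{n+1}(h_{n+1}-h_n)$, controlled by the uniform operator-norm bound $\norm{T_{n+1}}\leq 1$, and a term $T_{n+1}h_n - \underline{T}\,h_* = \underline{T}\,h_n - \underline{T}\,h_*$, controlled by the non-expansiveness of $\underline{T}$. Everything else reduces to routine use of the structural lemmas on $T\vert_{A^c}$ already proved.
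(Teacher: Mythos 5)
Your proof is correct and follows essentially the same route as the paper: monotonicity is obtained by restricting the inequality $T_{n+1}h_n \leq T_n h_n$ to $A^c$, iterating it via Lemma~\ref{lemma:basic_properties_restriction}, and passing to the limit with Lemma~\ref{lemma:submatrix_powers_vanish}; and the limit $h_*$ is identified as a solution through precisely the same triangle-inequality decomposition of $T_{n+1}h_{n+1}-\underline{T}\,h_*$ into $T_{n+1}(h_{n+1}-h_n)$ and $\underline{T}\,h_n-\underline{T}\,h_*$, each controlled by non-expansiveness. The only (minor) omission is that the existence of an extreme point $T_{n+1}$ with $\underline{T}\,h_n=T_{n+1}h_n$ deserves a citation of Lemma~\ref{lemma:extreme_points}.
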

\begin{proof}
First, for any $n\in\nats$, the solution $h_n\in\gambles$ of the linear system $h_n=\mathbb{I}_{A^c}+\mathbb{I}_{A^c}\cdot T_nh_n$ exists and is non-negative due to Proposition~\ref{prop:solution_of_linear_system}, and there is an extreme point $T_{n+1}$ of $\mathcal{T}$ that satisfies $\underline{T}\,h_n=T_{n+1}h_n$ due to Lemma~\ref{lemma:extreme_points}. Hence the sequence $\{h_n\}_{n\in\nats}$ is well-defined and bounded below by $0$.
Due to Proposition~\ref{prop:solution_of_linear_system}, it holds that $h_n(x)=0$ for all $x\in A$ and all $n\in\nats$. Thus the sequence $\{h_n\}_{n\in\nats}$ is trivially non-increasing and convergent on $A$. We will next establish the same on $A^c$.

First note that $h_n\cdot\mathbb{I}_{A^c}=h_n$ for all $n\in\nats$, because $h_n(x)=0$ for all $x\in A$. Now fix any $n\in\nats$. Then $T_{n+1}h_n = \underline{T}\,h_n\leq T_nh_n$ because $T_n\in\mathcal{T}$, and therefore in particular it holds that
\begin{equation*}
T_{n+1}\vert_{A^c}(h_n\vert_{A^c}) = \bigl(T_{n+1}h_n\bigr)\vert_{A^c} \leq \bigl(T_{n}h_n\bigr)\vert_{A^c} = T_{n}\vert_{A^c}(h_n\vert_{A^c})\,.
\end{equation*}
Therefore, and because $h_n=\mathbb{I}_{A^c}+\mathbb{I}_{A^c}\cdot T_nh_n$, it holds that
\begin{equation*}
h_n\vert_{A^c} = 1 + T_n\vert_{A^c}(h_n\vert_{A^c}) \geq 1 + T_{n+1}\vert_{A^c}(h_n\vert_{A^c})\,.
\end{equation*}
Now note that $T_{n+1}\vert_{A^c}$ is a monotone map due to Proposition~\ref{prop:dual_set_representation} and Lemma~\ref{lemma:basic_properties_restriction}. Hence, and
using the fact that $T_{n+1}\vert_{A^c}$ is a linear map, repeatedly expanding this inequality into the right-hand side gives, after $m\in\nats$ expansions,
\begin{equation*}
h_n\vert_{A^c} \geq \bigl(T_{n+1}\vert_{A^c}\bigr)^m(h_n\vert_{A^c}) + \sum_{k=0}^{m-1}\bigl(T_{n+1}\vert_{A^c}\bigr)^k1\,.
\end{equation*}
Taking limits in $m$, and using $\lim_{m\to+\infty}(T_{n+1}\vert_{A^c})^m=0$ and $\sum_{k=0}^{+\infty} (T_{n+1}\vert_{A^c})^k=(I-T_{n+1}\vert_{A^c})^{-1}$ due to Lemma~\ref{lemma:submatrix_powers_vanish}, it follows that
\begin{equation*}
h_n\vert_{A^c} \geq (I-T_{n+1}\vert_{A^c})^{-1}1 = h_{n+1}\vert_{A^c}\,,
\end{equation*}
where we used Proposition~\ref{prop:solution_of_linear_system} for the equality, which we can do because, by construction, $h_{n+1}\in\gambles$ and $h_{n+1}=\mathbb{I}_{A^c}+\mathbb{I}_{A^c}\cdot T_{n+1}h_{n+1}$. Thus the sequence $\{h_n\}_{n\in\nats}$ is non-increasing. Because we know that $h_n\geq 0$ for all $n\in\nats$ due to Proposition~\ref{prop:solution_of_linear_system}, it follows that the limit $h_*\coloneqq \lim_{n\to+\infty} h_n$ exists.

Let us now show that $h_*$ solves~\eqref{eq:fundamental_system}. To this end, fix $n\in\nats$ and note that
\begin{align*}
\lVert h_* - \mathbb{I}_{A^c}-\mathbb{I}_{A^c}\cdot\underline{T}\,h_*\rVert 
  &\leq \lVert h_* - h_{n+1}\rVert 
  + \lVert h_{n+1}-\mathbb{I}_{A^c}-\mathbb{I}_{A^c}\cdot T_{n+1}h_{n+1}\rVert \\
  &\quad\quad\quad 
  + \lVert \mathbb{I}_{A^c}\cdot T_{n+1}h_{n+1} - \mathbb{I}_{A^c}\cdot \underline{T}\,h_*\rVert \\
  &= \lVert h_* - h_{n+1}\rVert 
   + \lVert \mathbb{I}_{A^c}\cdot T_{n+1}h_{n+1} - \mathbb{I}_{A^c}\cdot \underline{T}\,h_*\rVert \\
  &\leq \lVert h_* - h_{n+1}\rVert 
      + \lVert T_{n+1}h_{n+1} - \underline{T}\,h_* \rVert \\
  &\leq \lVert h_* - h_{n+1}\rVert 
      + \lVert T_{n+1}h_{n+1} - \underline{T}\,h_{n}\rVert 
      + \lVert \underline{T}\,h_{n} - \underline{T}\,h_*\rVert \\
&\leq \lVert h_* - h_{n+1}\rVert 
  + \lVert T_{n+1}h_{n+1} - \underline{T}\,h_{n}\rVert 
  + \lVert \,h_{n} - \,h_*\rVert \\
&= \lVert h_* - h_{n+1}\rVert 
+ \lVert T_{n+1}h_{n+1} - T_{n+1}h_{n}\rVert 
+ \lVert \,h_{n} - \,h_*\rVert \\
&\leq  \lVert h_* - h_{n+1}\rVert 
+ \lVert h_{n+1} - h_{n}\rVert 
+ \lVert \,h_{n} - \,h_*\rVert\,,
\end{align*}
where we used property~\ref{transprop:nonexpand} for the final two inequalities.
Taking limits in $n$, all summands on the right-hand side vanish, from which we conclude that $h_* = \mathbb{I}_{A^c}+\mathbb{I}_{A^c}\cdot\underline{T}\,h_*$. In other words, $h_*$ is a solution of~\eqref{eq:fundamental_system}. Because $\underline{T}$ satisfies~\ref{coh:nonneghomogen}-\ref{coh:bounds} and~\ref{reach_condition}, by Proposition~\ref{prop:existence_is_unique_and_finite}, Equation~\eqref{eq:fundamental_system} has a unique solution $\underline{h}$, and hence we have that $\underline{h}=h_*$.
\qed
\end{proof}
We remark without proof that if, in the statement of Proposition~\ref{prop:algorithm_works}, we instead take each $T_{n+1}$ to be an extreme point of $\mathcal{T}$ such that $\overline{T}h_n=T_{n+1}h_n$, then the sequence instead becomes non-decreasing, and converges to a limit that is the unique non-negative solution of~\eqref{eq:fundamental_system_conjugate} in $\gambles$. So, we can use a completely similar method to compute \emph{upper} expected hitting times. That said, in the remainder of this work, we will focus on the version for lower expected hitting times.

\section{Complexity Analysis}\label{sec:complexity}

We will say that a sequence $\{h_n\}_{n\in\nats}$ of vectors is \emph{strictly decreasing} if $h_{n+1}\leq h_{n}$ and $h_{n+1}\neq h_{n}$ for all $n\in\nats$.
\begin{corollary}\label{cor:sequence_decreasing_or_done}
Let $\underline{T}:\gambles\to\gambles$ be a map that satisfies~\ref{coh:nonneghomogen}-\ref{coh:bounds} and~\ref{reach_condition}, and let $\mathcal{T}$ be its dual representation. Then any sequence $\{h_n\}_{n\in\nats}$ in $\gambles$ constructed as in Proposition~\ref{prop:algorithm_works}, is either strictly decreasing everywhere, or, for some $m\in\nats$,  is strictly decreasing for all $n<m$ and satisfies $h_k=h_*$ for all $k\geq m$, with $h_*=\lim_{n\to+\infty} h_n$.
\end{corollary}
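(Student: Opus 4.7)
The plan is to reduce the corollary to a single key dichotomy: either consecutive terms are always distinct, or the first place where two consecutive terms coincide is precisely the place where the sequence has reached $h_*$. Everything then follows from the two uniqueness results already established (Proposition~\ref{prop:existence_is_unique_and_finite} for the non-linear system, and Proposition~\ref{prop:solution_of_linear_system} for the linear systems).

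First I would prove the key observation: if $h_{n+1} = h_n$ for some $n\in\nats$, then $h_n = h_*$. The argument is short. By construction $h_{n+1}$ solves $h_{n+1} = \mathbb{I}_{A^c}+\mathbb{I}_{A^c}\cdot T_{n+1}h_{n+1}$, and $T_{n+1}$ is chosen so that $\underline{T}\,h_n = T_{n+1}h_n$. Substituting $h_{n+1} = h_n$ gives $h_n = \mathbb{I}_{A^c}+\mathbb{I}_{A^c}\cdot T_{n+1}h_n = \mathbb{I}_{A^c}+\mathbb{I}_{A^c}\cdot \underline{T}\,h_n$, so $h_n$ is a solution of~\eqref{eq:fundamental_system} in $\gambles$. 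Uniqueness of that solution (Proposition~\ref{prop:existence_is_unique_and_finite}) forces $h_n = \underline{h} = h_*$.

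Next I would show that once the sequence hits $h_*$, it stays there. Suppose $h_n = h_*$. Then $T_{n+1}h_n = \underline{T}\,h_n = \underline{T}\,h_*$, and because $h_*$ solves~\eqref{eq:fundamental_system}, it also satisfies $h_* = \mathbb{I}_{A^c}+\mathbb{I}_{A^c}\cdot\underline{T}\,h_* = \mathbb{I}_{A^c}+\mathbb{I}_{A^c}\cdot T_{n+1}h_*$. Hence $h_*$ is a solution in $\gambles$ of the linear system that defines $h_{n+1}$, and by the uniqueness part of Proposition~\ref{prop:solution_of_linear_system} we get $h_{n+1} = h_*$. A trivial induction then yields $h_k = h_*$ for all $k\geq n$.

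With these two facts in hand, the corollary reduces to a case split. If $h_{n+1}\neq h_n$ for every $n\in\nats$, then, combined with the non-increasing property from Proposition~\ref{prop:algorithm_works}, the sequence is strictly decreasing everywhere. Otherwise the set $\{n\in\nats \,:\, h_{n+1}=h_n\}$ is non-empty; let $m$ be its minimum. Minimality of $m$ gives that $h_{n+1}\neq h_n$ for all $n<m$, i.e.\ strict decrease there; the first observation above gives $h_m = h_*$; and the second observation propagates this to $h_k = h_*$ for all $k\geq m$. There is no real obstacle here: the argument is essentially a bookkeeping exercise on top of the two uniqueness theorems. The only subtlety worth being explicit about is that the uniqueness in Proposition~\ref{prop:solution_of_linear_system} is what prevents the iteration from ``jumping off'' $h_*$ after it has landed, even though the update rule might select a different extreme point $T_{n+1}$ at that step.
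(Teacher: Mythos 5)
Your proof is correct, and it takes a cleaner, more direct route than the paper's. The paper proves the key claim (that $h_m = h_{m+1}$ forces $h_m = h_*$) by contradiction: it constructs an auxiliary constant sequence $h_n' \equiv h_m$ that satisfies the hypotheses of Proposition~\ref{prop:algorithm_works}, and then invokes the convergence conclusion of that proposition to deduce $h_m = \underline{h} = h_*$. You instead observe directly that $h_{m+1} = h_m$ together with $\underline{T}\,h_m = T_{m+1}h_m$ means $h_m$ itself satisfies Equation~\eqref{eq:fundamental_system}, and then you apply the uniqueness of Proposition~\ref{prop:existence_is_unique_and_finite}. This is the same underlying mechanism, but you expose it rather than routing it through the black box of Proposition~\ref{prop:algorithm_works} applied to an artificial sequence. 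Your second observation (that the iteration cannot ``jump off'' $h_*$, via uniqueness in Proposition~\ref{prop:solution_of_linear_system}) is correct and pleasant, but it is not strictly necessary: since the sequence is non-increasing and converges to $h_*$, $h_m = h_*$ already forces $h_* = h_m \geq h_{m+1} \geq \cdots \geq h_*$ and hence equality throughout. Either way, both proofs establish the same dichotomy and both rest on the uniqueness theorems of Section~\ref{sec:existence}; yours is the more transparent of the two.
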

\begin{proof}
Because we know from Proposition~\ref{prop:algorithm_works} that the sequence $\{h_n\}_{n\in\nats}$ is non-increasing, the converse of this statement is that there is some $m\in\nats$ such that $h_m=h_{m+1}$ with $h_m\neq h_*$. Suppose \emph{ex absurdo} that this is the case.

Consider the co-sequence $\{T_n\}_{n\in\nats}$ of extreme points of $\mathcal{T}$ that was used to construct the sequence $\{h_n\}_{n\in\nats}$. Let $T\coloneqq T_{m+1}$ and $h\coloneqq h_m=h_{m+1}$. Then it holds that $\underline{T}\, h = Th$, and $h$ is the unique solution in $\gambles$ of the linear system $h=\mathbb{I}_{A^c}+\mathbb{I}_{A^c}\cdot Th$. 
Following the conditions of Proposition~\ref{prop:algorithm_works}, we now construct a sequence $\{T_n'\}_{n\in\nats}$, and a co-sequence $\{h_n'\}_{n\in\nats}$ in $\gambles$, such that $h_n'=\mathbb{I}_{A^c}+\mathbb{I}_{A^c}\cdot T_n'h_n'$ for all $n\in\nats$. To this end, first set $T_1'\coloneqq T$. This yields $h_1' = h$, and because $T\in\mathcal{T}$ is an extreme point of $\mathcal{T}$ that satisfies $\underline{T}\,h_1'=\underline{T}\,h=Th=Th_1'$, we can take $T_2'\coloneqq T$, which yields $h_2'=h$. Proceeding in this fashion, we obtain $T_n'=T$ and $h_n'=h$ for all $n\in\nats$. This sequence $\{h_n'\}_{n\in\nats}$ satisfies the conditions of Proposition~\ref{prop:algorithm_works}, but $\lim_{n\to+\infty}h_n'=h=h_m\neq h_*$, a contradiction.
\qed
\end{proof}

\begin{corollary}\label{cor:only_the_extreme_points}
Let $\underline{T}:\gambles\to\gambles$ be a map that satisfies~\ref{coh:nonneghomogen}-\ref{coh:bounds} and~\ref{reach_condition}, and let $\mathcal{T}$ be its dual representation. Suppose that $\mathcal{T}$ has a finite number $m\in\nats$ of extreme points. Then any sequence $\{h_n\}_{n\in\nats}$ in $\gambles$ constructed as in Proposition~\ref{prop:algorithm_works}, satisfies $h_k=h_*=\lim_{\ell\to+\infty}h_\ell$ for all $k\geq n$, for some $n\leq m$.
\end{corollary}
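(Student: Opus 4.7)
The plan is to combine Corollary~\ref{cor:sequence_decreasing_or_done} with a pigeonhole-style counting argument on the sequence $\{T_n\}_{n\in\nats}$ of extreme points used to construct $\{h_n\}_{n\in\nats}$. The first step is to invoke Corollary~\ref{cor:sequence_decreasing_or_done}, which reduces the claim to ruling out the case in which the sequence $\{h_n\}_{n\in\nats}$ is strictly decreasing everywhere, and to bounding (in the other case) the stabilization index $n$ by $m$.

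The key observation driving both of these is that the map $T_n \mapsto h_n$, when $h_n$ is defined as the unique real-valued solution of $h_n = \mathbb{I}_{A^c}+\mathbb{I}_{A^c}\cdot T_n h_n$, is injective: by Proposition~\ref{prop:solution_of_linear_system}, $h_n$ is uniquely determined by $T_n$, so $T_i = T_j$ forces $h_i = h_j$. Contrapositively, whenever $h_i \neq h_j$, we have $T_i \neq T_j$. Next, I would note that being strictly decreasing in the sense of the preamble to Corollary~\ref{cor:sequence_decreasing_or_done}, combined with the non-increasing character established in Proposition~\ref{prop:algorithm_works}, implies that all terms in a strictly decreasing run are pairwise distinct (since $h_1 \geq h_2 \geq \cdots$ pointwise and successive terms differ).

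Assembling these pieces: if $\{h_n\}_{n\in\nats}$ were strictly decreasing everywhere, then all $h_n$ would be pairwise distinct, hence so would all $T_n$, contradicting the hypothesis that $\mathcal{T}$ has only $m$ extreme points. Therefore the second branch of Corollary~\ref{cor:sequence_decreasing_or_done} must hold, yielding some index $n \in \nats$ at which the sequence stabilizes, with strict decrease for all smaller indices. By the argument above, $h_1, \ldots, h_n$ are then pairwise distinct, whence $T_1, \ldots, T_n$ are pairwise distinct extreme points of $\mathcal{T}$, forcing $n \leq m$. Combined with $h_k = h_*$ for all $k \geq n$, this yields the claim.

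The main (minor) obstacle is the bookkeeping in step three: being careful that ``strictly decreasing for all $k < n$'' produces $n$ pairwise distinct values $h_1, \ldots, h_n$, rather than just $n-1$, and correspondingly $n$ pairwise distinct extreme points; and that the $T_n$'s used are genuinely in $\mathcal{T}$'s extreme-point set (which is granted by construction, via Lemma~\ref{lemma:extreme_points}). Everything else is a direct assembly of already-established results, so no new technical machinery is needed.
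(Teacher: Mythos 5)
Your proof is correct and follows essentially the same pigeonhole argument as the paper: each $h_n$ is determined by the extreme point $T_n$, so at most $m$ distinct values can occur, which (combined with Corollary~\ref{cor:sequence_decreasing_or_done}) forces stabilization by index $m$. The paper applies pigeonhole directly to $h_1,\ldots,h_{m+1}$ and then invokes Corollary~\ref{cor:sequence_decreasing_or_done}, whereas you invoke the corollary first and then rule out the strictly-decreasing branch; these are the same idea in a different order. One small terminological slip: what you need (and what Proposition~\ref{prop:solution_of_linear_system} gives) is that $T\mapsto h_T$ is \emph{well-defined}, i.e.\ $T_i=T_j\Rightarrow h_i=h_j$, whose contrapositive is $h_i\neq h_j\Rightarrow T_i\neq T_j$. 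You call this ``injective,'' but injectivity is the converse implication; your deduction is nonetheless the correct one.
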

\begin{proof}
Because $\mathcal{T}$ has $m\in\nats$ extreme points, it follows from Proposition~\ref{prop:algorithm_works} that the elements of the sequence $\{h_n\}_{n\in\nats}$ can take at most $m$ distinct values; each corresponding to an extreme point of $\mathcal{T}$. Hence there is some $n\leq m$ such that $h_n=h_{m+1}$, that is, $h_{m+1}$ must have a value that was already obtained earlier in the sequence. Now apply Corollary~\ref{cor:sequence_decreasing_or_done}.
\qed
\end{proof}

This result suggests that the numerical scheme proposed in Proposition~\ref{prop:algorithm_works} takes at most $m$ iterations, where $m$ is the number of extreme points of $\mathcal{T}$. In fact, it is possible to show that this bound is tight, i.e. for any $m\in\nats$ there is a map $\underline{T}$ whose dual representation $\mathcal{T}$ has $m$ extreme points, for which the method suggested in Proposition~\ref{prop:algorithm_works} takes exactly $m$ iterations when $T_1\in\mathcal{T}$ is chosen carefully. Unfortunately, the construction that shows the tightness is fairly involved, and we must omit it here for reasons of brevity.

Let us now remark on the other computational aspects of the algorithm. At each step $n\in\nats$ of the method, we need to solve two problems. First, given $T_{n}\in\mathcal{T}$, we need to compute $h_n$. This is equivalent to computing the expected hitting time of $A$ for the precise Markov chain identified by $T_n$, so this step can be solved by any method available in the literature for the latter problem. However, Proposition~\ref{prop:solution_of_linear_system} tells us that $h_n$ can be obtained by solving a linear system. Hence, ignoring issues of numerical precision, the complexity of this step is at most $\mathcal{O}(\lvert\states\rvert^\omega)$~\cite{ibarra1982generalization}\label{notation:complexity}, which is the complexity of multiplying two $\abs{\states}\times\abs{\states}$ matrices; the current best estimate for this exponent is $\omega\approx 2.38$~\cite{Brand2020ADL}. 

Secondly, we need to find an extreme point $T_{n+1}\in\mathcal{T}$ such that $T_{n+1}h_n=\underline{T}\,h_n=\inf_{T\in\mathcal{T}}Th_n$. The complexity of this step will depend strongly on the way $\underline{T}$ is encoded. In many practical applications, however, it will be derived from a given set $\mathcal{T}$ of linear maps satisfying~\ref{coh:nonneghomogen}-\ref{coh:bounds}; that is, the dual representation $\mathcal{T}$ is often given explicitly and used to describe $\underline{T}$, rather than the other way around. Since $\mathcal{T}$ has separately specified rows, to identify $\mathcal{T}$ one only has to specify the sets $\mathcal{T}_x$. In turn, these sets $\mathcal{T}_x$ can, in practice, often be described by a finite number of linear inequalities; in the context of Markov chains, these inequalities represent given bounds on the transition probabilities for moving from the state $x$ to other states. Under these conditions, each $\mathcal{T}_x$ will be a set that is non-empty (assuming feasibility of the specified constraints), closed, and convex, with a finite number of extreme points. 

In this case, computing each $[\underline{T}\,f](x)$ (and in particular $[\underline{T}\,h_n](x)$) reduces to solving a linear programming problem in $N=\lvert\states\rvert+c$ variables,\footnote{It is worth noting that this analysis is somewhat pessimistic, because it assumes no real structure on the underlying dynamical system; in practice it will often be unlikely that a given state $x$ can move to \emph{every} other state $y$ in a single step, whence $\mathcal{T}_x$ will live in a subspace of dimension (much) less than $\lvert\states\rvert$.} where $c$ is the number of constraints used to specify $\mathcal{T}_x$. For instance, the simplex algorithm can be used to solve this problem, with the added benefit that the returned optimal solution is an extreme point of $\mathcal{T}_x$. Unfortunately, the complexity of the simplex algorithm depends on the pivot rule that is used, and the existence of a polynomial-time pivot rule is still an open research question~\cite{todd2002linearprog,dadush2017smoothedsimplex}. Nevertheless, it typically performs very well in practice; roughly speaking, probabilistic analyses typically yield an expected number of iterations on the order of $\mathcal{O}(N^{2+\alpha})$, with $\alpha>0$ depending on the specifics of the analysis. For example,~\cite{dadush2017smoothedsimplex} gives a bound for the expected number of iterations on the order of $\mathcal{O}(\abs{\states}^2\sqrt{\log c}\sigma^{-2}+\abs{\states}^3\log^{\frac{2}{3}}c)$, where $\sigma$ is a parameter of the probabilistic analysis. Crucially, however, we are not aware of a result that shows an (expected) runtime of the simplex algorithm that is sub-quadratic in $\abs{\states}$. Moreover, a very recent result~\cite{Brand2020ADL} gives a deterministic interior-point algorithm that solves this problem (approximately), and which runs in $\tilde{\mathcal{O}}(N^\omega)$,\footnote{Following~\cite{Brand2020ADL}, $\tilde{\mathcal{O}}(N)$ hides $\mathrm{polylog}(N)$ factors.} where, as before, $\mathcal{O}(N^\omega)$ is the complexity of multiplying two $N\times N$ matrices. As that author notes, this complexity can be regarded as essentially optimal. However, because this is an interior-point method, the resulting solution is not guaranteed to be an extreme point of $\mathcal{T}_x$, which we require for our algorithm, so we cannot use the result here other than for illustrating the complexity of the minimisation problem. 

In any case, the above considerations suggest that finding $T_x\in\mathcal{T}_x$ such that $T_xf=[\underline{T}\,f](x)$, for a generic $\mathcal{T}_x$ that is specified using a finite number of linear constraints, will typically take \emph{at least} $\mathcal{O}(\abs{\states}^2)$ time.
This then needs to be repeated $\lvert\states\rvert$ times, once for each $x\in\states$. Hence, finding an extreme point $T_{n+1}\in\mathcal{T}$ such that $\underline{T}\,h_n=T_{n+1}h_n$ can be done in polynomial expected time, but requires at least $\mathcal{O}(\lvert\states\rvert^3)$ time (in expectation).
In conclusion, the above analysis tells us that a single step of our proposed method has a expected runtime complexity of at least $\mathcal{O}(\lvert\states\rvert^3)$, where the complexity is dominated by finding $T_{n+1}\in\mathcal{T}$ such that $\underline{T}\,h_n=T_{n+1}h_n$, rather than by computing $h_n$ from the previous $T_n$. 

Let us finally compare this to the only other method in the literature of which we are aware, which is described in Proposition~\ref{prop:solutions_by_iteration}. Starting with $\underline{h}_0\coloneqq \mathbb{I}_{A^c}$, at each step of this method, we need to compute $\underline{T}\,\underline{h}_{n-1}$. Following the discussion above, and depending on the algorithm and analysis used, we can expect this to have a complexity of at least $\mathcal{O}(\abs{\states}^3)$. Since this was also the dominating factor in the complexity of our new algorithm, we conclude that the two methods have comparable complexity (order) per iteration. What remains is a comparison of the number of iterations they require. With $\underline{h}_n$ as in Proposition~\ref{prop:solutions_by_iteration}, it follows from~\cite[Lemma 42]{krak2019extendedhitting} that $\underline{h}_n\leq n+1$ for all $n\in\nats$. Hence, for any $\epsilon>0$, this method will take at least $(\norm{\underline{h}}-\epsilon)-1$ iterations, before $\norm{\underline{h}_n-\underline{h}}\leq \epsilon$.
The new method from Proposition~\ref{prop:algorithm_works} is thus more efficient than the method from Proposition~\ref{prop:solutions_by_iteration}, when the number $m$ of extreme points of $\mathcal{T}$ is less than $\norm{\underline{h}}$. Of course, since we do not know $\underline{h}$ to begin with, this does not provide a practically useful way to determine which method to use in practice.

Moreover, because the number $m$ of extreme points of $\mathcal{T}$ is potentially extremely large, one may wonder whether the condition $m<\norm{\underline{h}}$ is ever really satisfied in practice. However, a first empirical analysis suggests that the bound $m$ on the number of iterations, although tight, is not representative of the average case complexity of our algorithm.

In particular, we performed experiments where we randomly created sets $\mathcal{T}$, and applied the method from Proposition~\ref{prop:algorithm_works} to the resulting systems. In these experiments, we varied the size of $\states$ between $10^2$ and $10^3$, while keeping $A$ a singleton throughout. For each setting, we generated sets $\mathcal{T}_x$ by sampling 50 points uniformly at random in the space of probability mass functions on $\states$, and taking $\mathcal{T}_x$ to be the convex hull of these points. Thus each $\mathcal{T}_x$ has 50 extreme points (almost surely with respect to the uniform sampling), and the set $\mathcal{T}$ constructed from them has $50^{\lvert\states\rvert}$ extreme points. We then ran the method from Proposition~\ref{prop:algorithm_works}, where we selected $T_1$ such that $\overline{T}\,\mathbb{I}_{A}=T_1\mathbb{I}_{A}$, and the number of iterations until convergence was recorded. Using the notation from Proposition~\ref{prop:algorithm_works}, we say that it converged after $n>1$ iterations, when we observe $h_n=h_{n-1}$; this is valid by Corollary~\ref{cor:sequence_decreasing_or_done}. This was repeated 50 times for each value of $\lvert\states\rvert$. 

The results of these experiments are shown in Figure~\ref{fig:empirical_iterations}. We see that in the majority of cases, the method converges to the correct solution in three iterations, although sometimes it takes four iterations. There were no instances where the method took more iterations to converge, although in one instance (not shown) with $\lvert\states\rvert=10^3$, the method converged in only two iterations.

\begin{figure}[htb]
	\begin{tikzpicture}
	\begin{axis}[x tick label style = {/pgf/number format/1000 sep=},
	width=0.8\textwidth, height=3.5cm,
	ylabel=Count,enlargelimits=0.1,legend style={at={(1.05, 0.5)},anchor=west,legend columns=1},
	ybar,
	bar width=4pt,
	xmin=100, xmax=1000,
	xlabel={Number $\lvert\states\rvert$ of states},
	xtick={100,200,300,400,500,600,700,800,900,1000},
	ytick={0,25,50}]
	
	
	\addplot+[color=grijs!65, draw=grijs!65] coordinates{(100,40)(200,42)(300,46)(400,43)(500,46)(600,43)(700,44)(800,42)(900,48)(1000,45)};
	
	\addplot+[color=geel!65, draw=geel!65] coordinates{(100,10)(200,8)(300,4)(400,7)(500,4)(600,7)(700,6)(800,8)(900,2)(1000,4)};
	
	\legend{~3 Iterations~\quad,~4 Iterations~\quad}
	
	\end{axis}
	\end{tikzpicture}
	\caption{Number of instances, out of 50, on which the method from Proposition~\ref{prop:algorithm_works} converged in a given number of iterations, for different sizes of the set $\states$.}\label{fig:empirical_iterations}
\end{figure}
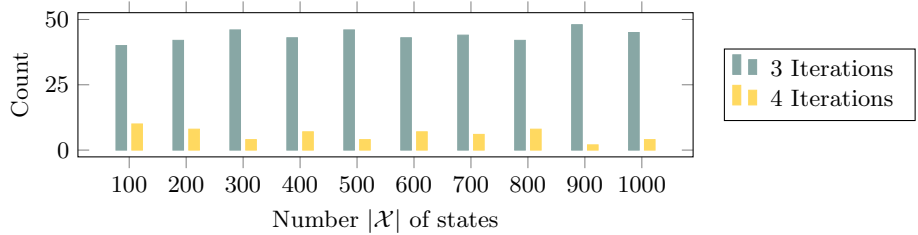

Although still preliminary, we believe that these results are indicative of an average runtime complexity that is vastly more efficient than the worst case suggested by Corollary~\ref{cor:only_the_extreme_points}. In future work, we hope to examine this average case performance more thoroughly. Finally, in~\cite{krak2019hitting} the authors already noted the close connection between the system~\eqref{eq:fundamental_system}, and the equations of optimality that one encounters in the theory of Markov decision processes (MDPs)~\cite{feinberg2012handbook}. Moreover, the method that we propose in Proposition~\ref{prop:algorithm_works}, although discovered independently, is reminiscent of the \emph{policy iteration} algorithm for MDPs. Interestingly, the method in Proposition~\ref{prop:solutions_by_iteration} bears a similar resemblance to the \emph{value iteration} algorithm for MDPs. We hope to explore these connections more fully in future work.

\subsubsection*{Acknowledgements.}
The main results of this work were previously presented, without proof, as an abstract-and-poster contribution at ISIPTA 2019. The author wishes to express his sincere gratitude to Jasper De Bock for stimulating discussions and insightful comments during the preparation of this manuscript. He would also like to thank an anonymous reviewer for their helpful suggestions.

%
%
%
%
\bibliographystyle{splncs04}
\bibliography{ComputingHittingTimes}

\begin{thebibliography}{10}
\providecommand{\url}[1]{\texttt{#1}}
\providecommand{\urlprefix}{URL }
\providecommand{\doi}[1]{https://doi.org/#1}

\bibitem{augustin:2014}
Augustin, T., Coolen, F.P., Cooman, G.D., Troffaes, M.C.: Introduction to
  Imprecise Probabilities. Wiley (2014)

\bibitem{Brand2020ADL}
Brand, J.v.d.: A deterministic linear program solver in current matrix
  multiplication time. In: Symposium on Discrete Algorithms. pp. 259--278
  (2020)

\bibitem{campos2003computing}
Campos, M.A., Dimuro, G.P., da~Rocha~Costa, A.C., Kreinovich, V.: Computing
  2-step predictions for interval-valued finite stationary {M}arkov chains
  (2003)

\bibitem{dadush2017smoothedsimplex}
Dadush, D., Huiberts, S.: A friendly smoothed analysis of the simplex method.
  Proceedings of STOC 2018 pp. 390--403 (2018)

\bibitem{DeBock:2016}
De~Bock, J.: {The limit behaviour of continuous-time imprecise {M}arkov
  chains}. Journal of Nonlinear Science  \textbf{27}(1),  159--196 (2017)

\bibitem{decooman2016impreciseprocesses}
De~Cooman, G., De~Bock, J., Lopatatzidis, S.: Imprecise stochastic processes in
  discrete time: global models, imprecise {M}arkov chains, and ergodic
  theorems. International Journal of Approximate Reasoning  \textbf{76},
  18--46 (2016)

\bibitem{decooman:2008:trees}
De~Cooman, G., Hermans, F.: Imprecise probability trees: Bridging two theories
  of imprecise probability. Artificial Intelligence  \textbf{172}(11),
  1400--1427 (2008)

\bibitem{decooman:2010:markovepistemic}
De~Cooman, G., Hermans, F., Antonucci, A., Zaffalon, M.: Epistemic irrelevance
  in credal nets: the case of imprecise {M}arkov trees. International Journal
  of Approximate Reasoning  \textbf{51}(9),  1029--1052 (2010)

\bibitem{decooman:2009:markovlimit}
De~Cooman, G., Hermans, F., Quaeghebeur, E.: Imprecise {M}arkov chains and
  their limit behavior. Probability in the Engineering and Informational
  Sciences  \textbf{23}(4),  597--635 (2009)

\bibitem{dunford1988linear}
Dunford, N., Schwartz, J.T.: Linear operators, part 1: general theory, vol.~10.
  John Wiley \& Sons (1988)

\bibitem{feinberg2012handbook}
Feinberg, E.A., Shwartz, A.: Handbook of Markov decision processes: methods and
  applications, vol.~40. Springer Science \& Business Media (2012)

\bibitem{hartfiel2006markov}
Hartfiel, D.J.: {M}arkov set-chains. Springer (2006)

\bibitem{itip:stochasticprocesses}
Hermans, F., {\v{S}}kulj, D.: Stochastic processes. In: Augustin, T., Coolen,
  F.P., Cooman, G.D., Troffaes, M.C. (eds.) Introduction to Imprecise
  Probabilities, chap.~11. Wiley (2014)

\bibitem{ibarra1982generalization}
Ibarra, O.H., Moran, S., Hui, R.: A generalization of the fast {LUP} matrix
  decomposition algorithm and applications. Journal of Algorithms
  \textbf{3}(1),  45--56 (1982)

\bibitem{kozine2002interval}
Kozine, I.O., Utkin, L.V.: Interval-valued finite {M}arkov chains. Reliable
  computing  \textbf{8}(2),  97--113 (2002)

\bibitem{krak2019hitting}
Krak, T., T'Joens, N., De~Bock, J.: Hitting times and probabilities for
  imprecise {M}arkov chains. Proceedings of ISIPTA 2019 pp. 265--275 (2019)

\bibitem{krak2019extendedhitting}
Krak, T., T'Joens, N., De~Bock, J.: Hitting times and probabilities for
  imprecise {M}arkov chains. \url{https://arxiv.org/abs/1905.08781} (2019)

\bibitem{lopatatzidis2016robust}
Lopatatzidis, S.: Robust Modelling and Optimisation in Stochastic Processes
  Using Imprecise Probabilities, with an Application to Queueing Theory. Ph.D.
  thesis, Ghent University (2016)

\bibitem{norris:markovchains}
Norris, J.: {M}arkov Chains. Cambridge University Press (1997)

\bibitem{skulj2006finite}
{\v{S}}kulj, D.: Finite discrete time {M}arkov chains with interval
  probabilities. In: Soft Methods for Integrated Uncertainty Modelling, pp.
  299--306. Springer (2006)

\bibitem{todd2002linearprog}
Todd, M.: The many facets of linear programming. Mathematical Programming
  \textbf{91} (2002). \doi{10.1007/s101070100261}

\bibitem{walley1991statistical}
Walley, P.: Statistical reasoning with imprecise probabilities (1991)

\end{thebibliography}

\end{document}